\documentclass[final,hidelinks,onefignum,onetabnum,onealgnum,oneeqnum,onethmnum]{siamart250211}
\usepackage[T2A]{fontenc}
\usepackage[cp1251]{inputenc}
\usepackage[english]{babel}
\usepackage{latexml}
\usepackage{hyperref}
\hypersetup{
	pagebackref=true,
	colorlinks=true,
	linkcolor=blue,
	filecolor=magenta,      
	urlcolor=cyan,
	pdftitle={Abstract Fractional Cauchy Problem: Existence of Propagators and Inhomogeneous Solution Representation},
	pdfauthor={Dmytro Sytnyk, Barbara Wohlmuth}
	pdfpagemode=FullScreen,
}
\usepackage[capitalize,nameinlink]{cleveref}
\usepackage{latexsym,amsfonts,amssymb,amsmath}
\usepackage{graphics,graphicx,longtable,array,euscript,epsfig}
\graphicspath{{./pic/}}
\usepackage{algorithm} 
\usepackage{algorithmic}
\iflatexml

\else
\usepackage{orcidlink} 
\usepackage{overpic}
\usepackage[hyperpageref]{backref} 
\renewcommand*{\backrefalt}[4]{%
	\hypersetup{linkcolor=gray}%
	\color{gray}{%
		[%
		\ifcase #1 %
		No citations%
		\or
		Cited on p. #2%
		\else
		Cited on pp. #2%
		\fi
		]%
	}
}
\usepackage[author={Dmytro},draft]{fixme}
\fi
\usepackage{enumitem} 
\usepackage[section]{placeins} 

\iflatexml
 
\newtheorem{definition}{Definition}
\renewtheorem{theorem}{Theorem}
\newtheorem{example}{Example} 
\newtheorem{lemma}{Lemma}
\newtheorem{proposition}{Proposition}
\newtheorem{corollary}{Corollary}
\crefname{example}{Example}{Examples} 
\crefname{remark}{Remark}{Remarks} 
\crefname{subsection}{Section}{Sections} 
\crefname{section}{Section}{Sections} 
\else
\newsiamremark{remark}{Remark} 
\theoremheaderfont{\normalfont\bfseries}
\renewtheorem{remark}{Remark} 
\renewtheorem{definition}{Definition}
\renewtheorem{theorem}{Theorem}
 
\renewtheorem{lemma}{Lemma}
\renewtheorem{proposition}{Proposition}
\renewtheorem{corollary}{Corollary}
\crefname{example}{Example}{Examples} 
\crefname{remark}{Remark}{Remarks} 
\crefname{subsection}{Section}{Sections} 
\crefname{section}{Section}{Sections} 

\fi

\newcommand{\N}{\mathbb N}
\newcommand{\Z}{\mathbb Z}

\newcommand{\cL}{\mathcal{L}}

\newcommand{\suml}{\sum\limits}
\newcommand{\intl}{\int\limits}

\newcommand{\e}{\mathrm{e}}


\allowdisplaybreaks

\begin{document}
\title{Abstract Fractional Cauchy Problem: Existence of Propagators and Inhomogeneous Solution Representation}
\iflatexml
	\author{\href{https://orcid.org/0000-0003-3065-4921}{Dmytro Sytnyk}%
		\thanks{%
			Department of Numerical Mathematics, Institute of Mathematics, National Academy of Sciences of Ukraine, Kyiv, 01024, Ukraine; (\href{mailto:sytnik@imath.kiev.ua}{sytnik@imath.kiev.ua}). }%
		\\
		\href{https://orcid.org/0000-0001-6908-6015}{Barbara Wohlmuth}%
		\thanks{Department of Mathematics, Technical University of Munich, Garching, 85748, Germany; (\href{mailto:wohlmuth@ma.tum.de}{wohlmuth@ma.tum.de}).}
	}
\else
	\author{Dmytro Sytnyk\,\orcidlink{0000-0003-3065-4921}%
		\thanks{Department of Mathematics, Technical University of Munich, Garching, 85748, Germany;  (\email{syd@ma.tum.de});
			Department of Numerical Mathematics, Institute of Mathematics, National Academy of Sciences of Ukraine, Kyiv, 01024, Ukraine; (\email{sytnik@imath.kiev.ua}). }%
		\and
		Barbara Wohlmuth\,\orcidlink{0000-0001-6908-6015}%
		\thanks{Department of Mathematics, Technical University of Munich, Garching, 85748, Germany; (\email{wohlmuth@ma.tum.de}).}
	}
\fi
\date{\today}
\maketitle

\begin{abstract}
	We consider a Cauchy problem for the inhomogeneous differential equation given in terms of an unbounded linear operator $A$ and the Caputo fractional derivative of order $\alpha \in (0, 2)$ in time.
	The previously known representation of the mild solution to such a problem does not have a conventional variation-of-constants like form, with the propagator derived from the associated homogeneous problem.
	Instead, it relies on the existence of two propagators with different analytical properties.
	This fact limits theoretical and especially numerical applicability of the existing solution representation.
	Here, we propose an alternative representation of the mild solution to the given problem, that consolidates the solution formulas for sub-parabolic, parabolic and sub-hyperbolic equations with a positive sectorial operator $A$ and non-zero initial data.
	The new representation is solely based on the propagator of the homogeneous problem and, therefore, can be considered as a more natural fractional extension of the solution to the classical parabolic Cauchy problem.
	By exploiting a trade-off between the regularity assumptions on the initial data in terms of the fractional powers of $A$ and the regularity assumptions on the right-hand side in time, we show that the proposed solution formula is strongly convergent for $t \geq 0$ under considerably weaker assumptions compared to the standard results from the literature.
	Crucially, the achieved relaxation of space regularity assumptions ensures that the new solution representation is practically feasible for any $\alpha \in (0, 2)$ and is amenable to the numerical evaluation using uniformly accurate quadrature-based algorithms.
\end{abstract}

\begin{keyword}
	inhomogeneous Cauchy problem; Caputo fractional derivative; sub-parabolic problem; sub-hyperbolic problem; mild solution; propagator; contour representation;
\end{keyword}

\iflatexml
	\noindent\textbf{AMS subject classifications:}%
	{34A08, 35R11, 34G10, 35R20, 65L05, 65J08, 65J10}
\else
	\begin{AMS}
		{34A08, 35R11, 34G10, 35R20, 65L05, 65J08, 65J10}
	\end{AMS}
\fi

\section{Problem setting}
We consider a Cauchy problem for the following fractional order differential equation:
\begin{equation}\label{eq:FCP_DE}
	\partial_t^\alpha u(t) + A u(t) = f(t),\quad  \alpha \in (0,2), \ t  \in (0, T].
\end{equation}
Here,  $\partial_t^\alpha$ denotes the Caputo fractional derivative of order
$\alpha$ with respect to $t$
(see, e.g. \cite[p.\,91]{Kilbas2006})
\begin{equation}\label{eq:FracDeriv_Caputo}
	\partial_t^\alpha u(t)= \frac{1}{\Gamma(n-\alpha)} \intl_0^t(t-s)^{n-\alpha-1}u^{(n)}(s)\, ds,
\end{equation}
with $u^{(n)}(s)$ being the standard integer order derivative of $u(s)$;
$\Gamma(\cdot)$ stands for Euler's Gamma function defined by $\Gamma(x)=\int_0^\infty t^{x-1}e^{-t}dt$ and $n = \lceil \alpha \rceil $ is the smallest integer greater or equal to $\alpha$.
In this work we focus on the linear case of \eqref{eq:FCP_DE}, so the function $f \in C([0, T],X)$ only depends on time and space.
The choice of range $\alpha \in (0, 2)$ for the fractional derivative parameter is motivated by the applications of \eqref{eq:FCP_DE} to modeling of physical phenomena \cite{Schneider1989,Ray2018,DiNezza2012,Mainardi2022}.
If $n = 1$, the given differential equation is called sub-parabolic.
Similarly, when $n = 2$, this equation is called sub-hyperbolic.

The time-independent operator $A$ in \eqref{eq:FCP_DE}  is assumed to be closed and linear with the domain $D(A)$ dense in a Banach space $X = X(\|\cdot\|,\Omega)$ 
and the spectrum $\mathrm{Sp}(A)$ contained in the sectorial region $\Sigma(\rho_s, \varphi_s)$ of complex plane:
\begin{equation}\label{eq:SpSector}
	\Sigma(\rho_s, \varphi_s) = \left\{ z=\rho_s+\rho \e^{i\theta}:\quad \rho \in [0,\infty), \ \left|\theta\right|< \varphi_s
	\right\}, \quad \rho_s > 0, \  \varphi_s< \frac{\pi}{2}.
\end{equation}
The numbers $\rho_s$ and $\varphi_s$ are called spectral parameters of $A$.
In addition to the assumptions on the spectrum, we suppose that the resolvent of $A$:
$
	R\left(z, A\right) \equiv (zI-A)^{-1}
$
satisfies the bound
\begin{equation}\label{eq:ResSector}
	\left\|(zI-A)^{-1} x\right\| \leq M\frac{\|x\|}{1+\left|z\right|} , \quad M > 0,
\end{equation}
for all $x \in X$ and any fixed $z$ outside the sector $\Sigma(\rho_s, \varphi_s)$ and on its boundary.
Following the established convention \cite{bGavrilyuk2011}, we will call such operator $A$ as strongly positive.
The class of strongly positive operators includes second order elliptic partial differential operators \cite{Haase2006}, as well as more general strongly elliptic pseudo-differential operators defined over the bounded domain \cite{Bilyj2010}.
Spectral parameters $\rho_s$ and $\varphi_s$ can be estimated from the coefficients of the differential expression for such $A$; or using the associated sesquilinear form, if $X$ has additional Hilbert space structure and admits the embedding into a Gelfand triple $(V, X, V^*)$.
Precise results on that behalf are stated in Sections 2.1-2.2 of \cite{Gavrilyuk2004}.

To get an intuitive reasoning regarding the solution to the given equation  let us proceed informally at first.
We apply the Laplace transform $\cL$ to both parts of \eqref{eq:FCP_DE} and evaluate  $\cL\left\{\partial_t^\alpha u\right\}$ using formula (2.140) from \cite{Podlubny1999}:
\begin{equation*}\label{eq:Laplace_FracDeriv_Caputo}
	\cL\left\{\partial_t^\alpha u\right\}(z) = 	z^\alpha \cL\{u\}(z)   - \suml_{k=0}^{n - 1} z^{\alpha-1-k} u^{(k)}(0).
\end{equation*}
This allows us to rewrite the transformed equation as
\begin{equation*}
	z^\alpha \widehat{u}(z)	- \suml_{k=0}^{n-1} z^{\alpha-1-k} u^{(k)}(0) + A \widehat{u}(z) = \widehat{f}(z).
\end{equation*}
Henceforth, the notation $\widehat{u}(z)$, $\widehat{f}(z)$ will be used to denote $\cL\{u\}(z)$, $\cL\{f\}(z)$, respectively.
If the operator $(z^\alpha I + A)$ is invertible, this linear equation admits the solution
\begin{equation}\label{eq:FCP_EQ_LT}
	\widehat{u}(z) = (z^\alpha I + A)^{-1} \left( \suml_{k=0}^{n - 1} z^{\alpha-1-k} u^{(k)}(0) +  \widehat{f}(z)\right),
\end{equation}
which leads us to the representation of $u(t)$:
\[
	u(t)
	= \suml_{k=0}^{n-1} \cL^{-1}\left\{ z^{\alpha-1-k}(Iz^\alpha + A)^{-1} u^{(k)}(0)\right\} +  \cL^{-1}\left\{ (Iz^\alpha + A)^{-1} \widehat{f}
	(z)\right\},
\]
valid as long as the inverse Laplace transform $\cL^{-1}$ of the right-hand side from \eqref{eq:FCP_EQ_LT} exists.
One can employ a well-known property $\widehat{f}\, \widehat{g} = \cL\{f * g\}$ of the convolution
to deduce that
\begin{equation*}
	\begin{aligned}
		\cL^{-1}\left\{ (Iz^\alpha + A)^{-1} \widehat{f}(z) \right \}
		 & = \cL^{-1}\left\{ (Iz^\alpha + A)^{-1}\right\} * \cL^{-1}\{ \widehat{f}(z)\}                          \\
		 & = \frac{1}{2\pi i}\intl_0^t \intl_{\Gamma_I}  \e^{z (t-s)} (z^\alpha I + A)^{-1} \, d z f(s) \, d s . \\
	\end{aligned}
\end{equation*}
By substituting the last expression into the previously obtained formula for $u(t)$, we get
\begin{equation}\label{eq:FCP_Sol}
	\begin{aligned}
		u(t) =
		 & \frac{1}{2\pi i} \suml_{k=0}^{n-1} \intl_{\Gamma_I} \e^{z t} z^{\alpha-1-k} (z^\alpha I + A)^{-1} u^{(k)}(0)\, d z \\
		 & + \frac{1}{2\pi i}\intl_0^t \intl_{\Gamma_I}  \e^{z (t-s)} (z^\alpha I + A)^{-1} f(s)\, d z \, d s .
	\end{aligned}
\end{equation}
The integration contour $\Gamma_I$ in the Bromwich integrals above should be oriented counter-clockwise with respect to the singularities of the integrands.

For the time being, let us assume the existence of such $\Gamma_I$ and that the integrals in formula \eqref{eq:FCP_Sol} are well-defined for the given $A$, $f(t)$ and  $\alpha$.
Formula \eqref{eq:FCP_Sol} determines a unique solution to \eqref{eq:FCP_DE} when the initial values of $u(t)$ are prescribed by the conditions
\begin{equation}\label{eq:FCP_BC}
	\begin{cases}
		u(0) = u_0,                & \text{if $0 < \alpha \leq 1$}, \\
		u(0) = u_0, \ u'(0) = u_1, & \text{if $1< \alpha < 2$}.
	\end{cases}
\end{equation}

Fractional Cauchy problem \eqref{eq:FCP_DE}, \eqref{eq:FCP_BC} will be the main object of the current work.
The theory of such problems for differential operators was developed in the works \cite{Dzherbashian2020,Oldham1974,Schneider1989,Kochubei1990}.
The abstract setting, considered here, has been theoretically studied in \cite{Kochubei1989,Bazhlekova1998} for $\alpha \in (0, 1)$ and then in \cite{Bazhlekova2001} for $\alpha \in [1, 2)$.
Later, various particular cases of problem \eqref{eq:FCP_DE}, \eqref{eq:FCP_BC} were analyzed in the papers \cite{ElBorai2002,Hernandez2010,Li2009,Li2011,Li2015,Li2016,Araya2008,Keyantuo2013}.

The inhomogeneous solution formula for \eqref{eq:FCP_DE}, \eqref{eq:FCP_BC} also has a rich history.
The authors of the pioneering works \cite{Kochubei1989,Bazhlekova1998,Bazhlekova2001} established the solution existence using only the homogeneous version of  \eqref{eq:FCP_Sol}.
Shortly after that, in \cite{ElBorai2002} El-Borai derived the inhomogeneous solution representation for the abstract analogue of \eqref{eq:FCP_DE}, \eqref{eq:FCP_BC}, with the Riemann-Liouville time-fractional derivative in place of $\partial_t^\alpha$.
He used the fractional propagator
$P_\alpha(t) = \intl_0^\infty t^{-\alpha}\suml_{n=0}^\infty \frac{(-s t^{-\alpha})^n}{n! \Gamma(1-\alpha (n + 1) )} \exp{\left( -A s \right)}\, d s$,
which stems from the so-called subordination principle \cite{Pruess2013, Bazhlekova2000,Bazhlekova2001}.
In the latter works \cite{Zhou2013,Keyantuo2013} the proposed formula was extended to problem \eqref{eq:FCP_DE}, \eqref{eq:FCP_BC} and its generalizations.
Due to the properties of $P_\alpha(t)$
\cite{Mainardi1996,Gorenflo1998}, its applicability is limited to $\alpha \in (0, 1)$.
Therefore, in order to generalize beyond the sub-parabolic case and justify \eqref{eq:FCP_Sol} one has to adopt an alternative notion of the propagator $S_\alpha(t) = \frac{1}{2\pi i} \intl_{\Gamma_I}  e^{zt}z^{\alpha-1} (z^\alpha I + A)^{-1}  dz$, which was also proposed in \cite{Bazhlekova2001}.
For $\alpha \in (0, 1)$, the formal justification of \eqref{eq:FCP_Sol} with help of $S_\alpha(t)$ was conducted in \cite{Mendes2013}.
This work is notable for containing both global and local existence results.
In \cite{Li2015} the solution formula was extended using the Laplace transform to the case $\alpha \in (1, 2)$.
Actually, the formula provided in \cite[Definition 3.2]{Li2015} expresses
the solution of \eqref{eq:FCP_DE}, \eqref{eq:FCP_BC} solely in terms of $S_\alpha(t)$ and the convolution operations:
$u(t) = S_\alpha(t) u_0 + 1 * S_\alpha(t) u_1 + ( {t^{\alpha-2}}/{\Gamma(\alpha-1)} * S_\alpha * f) (t)$;
but, due to the singularity of $t^{\alpha-2}$ this representation is of practical reference for $\alpha \in (0, 2)$, only when reduced to \eqref{eq:FCP_Sol}.

Meanwhile, the authors of \cite{Hernandez2010} adopted an alternative solution derivation procedure.
Instead of working with the Laplace transformed formula from \eqref{eq:FCP_EQ_LT}, they considered the integral analogue of \eqref{eq:FCP_DE} and used the respective theory of the abstract integral equations \cite{Pruess2013}, which can be applied without specifying the precise notion of a propagator.
This resulted in three distinct mild solution representations \cite[Lemma 1.1]{Hernandez2010}, that are progressively attuned to the local time-regularity of the propagator and $f(t)$.
However, these formulas involve the derivative of the propagator, hence they are also practically unsuitable for the reasons to be explained below.

When the operator $A$ is bounded all reviewed representations are functionally equivalent to a well-known solution formula, viz. \eqref{eq:FCP_ML_sol_repr}, given in terms of the two parameter Mittag-Leffler functions \cite{Kilbas2006,Balachandran2016}.

It is worthwhile to note that \eqref{eq:FCP_Sol} might be viewed as an extension of the standard mild solution formula for the abstract first-order Cauchy problem \cite{b_krein_en,Dunford1988,b_cp_Fattorini1984}.
Indeed, when $\alpha = 1$, the contour integrals from \eqref{eq:FCP_Sol}  are reduced to the Dunford-Cauchy representation of
$\exp{\left( -A t \right)} u(0)$ and $\exp{\left( -A (t - s) \right)}f(s)$:
\[
	\exp{\left( -A t \right)}
	= \frac{1}{2\pi i} \intl_{\Gamma_I}  \e^{zt} (zI + A)^{-1} \, d z,
\]
defined in terms of the holomorphic function calculus for sectorial operators \cite{Haase2006}.
If $\alpha \neq 1$, such interpretation of the contour integrals as operator functions can no longer be applied, because the contour $\Gamma_I$ encircles $\mathrm{Sp}(-A)$ and the scalar-part singularity of the first term in \eqref{eq:FCP_Sol}, located at  $z = 0$.

The current study is motivated by an important observation regarding the aforementioned representation of the solution to problem \eqref{eq:FCP_DE}, \eqref{eq:FCP_BC}.
For a sufficiently large $z \in \Gamma_I$,  the norms of the integrands pertaining the homogeneous part of \eqref{eq:FCP_Sol} are asymptotically equal to $\left| e^{zt}\right| |z|^{-1-k}$, $k = 0,1$.
Consequently, they decay at least linearly in $z$, even if $t=0$.
Under the same conditions, the norm of the remaining integrand from \eqref{eq:FCP_Sol} is asymptotically equal to  $\left| e^{z(t-s)}\right| |z|^{-\alpha}$, which leads to the slower-than-linear decay with respect to $z$, for $t=s$ and $\alpha < 1$.
This fact is obviously detrimental to the practical applications of \eqref{eq:FCP_Sol} relying on the quadrature-based numerical evaluation of the solution \cite{McLean2010,Vasylyk2022,Colbrook2022a,Sytnyk2023}.
The same observation applies to the representations from \cite{Hernandez2010}.
The goal of this paper is to present an alternative and more algorithmically relevant form of the mild solution to problem \eqref{eq:FCP_DE}, \eqref{eq:FCP_BC}, valid for any $\alpha \in (0,2)$.

In \cref{sec:FCP_Propagators}, we show that the uniform strong convergence of integrals in \eqref{eq:FCP_Sol} for $\alpha \in (0, 2)$ and $t \in [0,T]$ necessitate the additional space regularity assumptions on the constituents of  \eqref{eq:FCP_DE}, \eqref{eq:FCP_BC}: $u_0 \in D(A^\gamma)$, $f(t)  \in D(A^\kappa)$ with some  $\gamma >0$ and  $\kappa > \frac{1-\alpha}{\alpha}$ (see  \cref{thm:FCP_Saa_sc}).
For $\alpha < 1$, this leads to the considerable reduction of the class of admissible right-hand sides.
In this section, we also further discuss the connection between formula \eqref{eq:FCP_Sol} and other existing solution representations, corresponding propagators, their validity and properties.

Guided by the gathered knowledge,  in \cref{sec:FCP_Sol_Ex_Repr}, we derive an alternative formula for the mild solution to the general inhomogeneous version of problem \eqref{eq:FCP_DE}, \eqref{eq:FCP_BC} with $\alpha \in (0,2)$.
It is the main theoretical result of the work.
The derived formula exhibits the strong convergence for $t \geq 0$, under the minimal space-regularity assumptions for $u_0$ and $f(t)$, akin to those of the standard parabolic Cauchy problem.
This property is achieved at the expense of imposing stricter regularity conditions on the right-hand side in time $f \in W^{1,1}([0, T], X)$, for $\alpha <1$.
To justify the new solution representation, we rely upon the results from the theory of abstract integral equations \cite{Pruess2013}, instead of the usual toolkit from holomorphic function calculus.
The new representation is thoroughly validated in \cite{Sytnyk2023}, where it is used as base for an exponentially convergent numerical method.

\section{Propagators and existing solution representation formulas}\label{sec:FCP_Propagators}
We start by stating several basic facts regarding problem \eqref{eq:FCP_DE}, \eqref{eq:FCP_BC}.
\begin{definition}[\cite{Bazhlekova2001}]\label{def:FCP_Strong_Sol}
	Let  $u_0, u_1 \in X$  and $f \in C([0, T],X)$.
	A function $u \in C([0,T],D(A))$ is called a strong solution of \eqref{eq:FCP_DE}, \eqref{eq:FCP_BC} if
	\begin{equation}\label{eq:FCP_strong_sol_regularity}
		\begin{aligned}
			 & u \in C^{n-1}([0,T],X),                                                                                                       \\
			 & \intl_0^{t}  (t-s)^{n-\alpha-1} \left( u(s) - \suml_{k=0}^{n - 1} \frac{s^{k}}{k!} u^{(k)}(0)\right)  d s \in C^{n}([0,T],X),
		\end{aligned}
	\end{equation}
	and \eqref{eq:FCP_DE}, \eqref{eq:FCP_BC} hold.
\end{definition}

\begin{definition}[\cite{Bazhlekova2001}]\label{def:FCP_well_posed}
	Problem \eqref{eq:FCP_DE}, \eqref{eq:FCP_BC} is called well-posed if for any given $x_0, x_1 \in D(A)$, there exists a unique strong solution $u(t, x_0, x_1)$ of \eqref{eq:FCP_DE}, \eqref{eq:FCP_BC} and for arbitrary sequences $x_{kp} \in D(A)$, the fact that $x_{kp} \rightarrow 0$ as $p \rightarrow \infty$, $k=0,1$ implies $u(t, x_{0p}, x_{1p}) \rightarrow 0$, uniformly on compact intervals.
\end{definition}

Next, we introduce the concept of the propagator for the given problem.
It coincides with a solution operator of \eqref{eq:FCP_DE}, \eqref{eq:FCP_BC}, when $u_1 = 0$ and $f(t) \equiv 0$, $t \geq 0$.
\begin{definition}[\cite{Pruess2013}]\label{def:FCP_Sol_Op}
	Let $\alpha \in (0,2)$. A bounded linear operator $S_\alpha(t): X \rightarrow X$ is called the propagator of \eqref{eq:FCP_DE}, \eqref{eq:FCP_BC}  if the following three conditions are satisfied:
	\begin{enumerate}[label=P\arabic*., ref=P\arabic*] \itemsep.1em
		\item $S_\alpha(t)$ is strongly continuous on $X$ for $t \geq 0$ and $S_\alpha(0) = I$.%
		      \label{def:FCP_Sol_Op_Id}
		\item $S_\alpha(t)D(A) \subset D(A)$ and $AS_\alpha(t) x = S_\alpha(t)Ax$ for all $x \in D(A)$, $t \geq 0$.%
		      \label{def:FCP_Sol_Op_Comm}
		\item $u(t) = S_\alpha(t) u_0$ is the solution of
		      \begin{equation}\label{eq:FCP_Hom_Mild}
			      u(t) = u_0 - \frac{1}{\Gamma(\alpha)}\intl_0^t (t-s)^{\alpha-1} A u(s) ds,
		      \end{equation}
		      for all $u_0 \in D(A)$, $t\geq0$.%
		      \label{def:FCP_Sol_Op_S}
	\end{enumerate}
\end{definition}
In this work, we focus on the mild solution to the given problem.
Let us consider a Volterra integral equation
\begin{equation}\label{eq:FCP_Mild}
	u(t) = \suml_{k=0}^{n-1}u_k t^k - \frac{1}{\Gamma(\alpha)}\intl_0^t (t-s)^{\alpha-1} \left ( A u(s) - f(s) \right ) ds.
\end{equation}
Similarly to \eqref{eq:FCP_Hom_Mild}, this equation serves as an integral analogue of problem   \eqref{eq:FCP_DE}, \eqref{eq:FCP_BC}, when $u_1$ and $f(t)$, are not identically equal to zero.
To show this, one needs to apply the Riemann-Liouville integral operator
\begin{equation}\label{eq:FCP_RLInt}
	J_\alpha v (t) = \frac{1}{\Gamma(\alpha)} \intl_0^t(t-s)^{\alpha-1}v(s)\, ds,  \quad v \in L^1([0,T], X)
\end{equation}
to both sides of \eqref{eq:FCP_DE} and evaluate $J_\alpha \partial_t^\alpha$  with help of the formula $J_\alpha \partial_t^\alpha u(t) = u(t) - \suml_{k=0}^{n - 1} \frac{u^{(k)}(0)}{\Gamma(k+1)} t^k $, that is valid if $u(t)$ satisfies regularity assumptions from \cref{def:FCP_Strong_Sol} (see \cite[Section  1.2]{Bazhlekova2001}).
The highlighted analogy suggests us to adopt from \cite{Mendes2013} the following definition of the mild solution.
\begin{definition}\label{def:FCP_Mild_Sol}
	For any given $\alpha \in (0,2)$, $u_0, u_1 \in X$ and $f \in C([0, T],X)$, a function $u \in C([0, T],D(A))$ is called a mild solution of \eqref{eq:FCP_DE}, \eqref{eq:FCP_BC} if it satisfies integral equation \eqref{eq:FCP_Mild}.
\end{definition}

Every strong solution of the given problem is also its mild solution.
The backward conjecture is false, since the solution $u(t)$ to \eqref{eq:FCP_Mild} does not satisfy conditions \eqref{eq:FCP_strong_sol_regularity} in general, so the derivative $\partial_t^\alpha u(t)$ might not exist.
The notions of strong and mild solution coincide when $f(t) \equiv 0$ and the second initial condition exhibits additional spatial regularity $u_1 \in D(A)$.
For that reason, the well-posedness of homogeneous version of problem \eqref{eq:FCP_DE}, \eqref{eq:FCP_BC} in the sense of \cref{def:FCP_well_posed} is equivalent to the well-posedness of integral equation \eqref{eq:FCP_Mild} in the sense of \cite[Definition 1.2]{Pruess2013}.
This observation is based on formula \eqref{eq:FCP_ML_sol_repr}, analyticity of the involving solution operators and the bound (2.27) from \cite{Bazhlekova2001}.
Furthermore according to Proposition 1.1 from \cite{Pruess2013},  equation \eqref{eq:FCP_Mild}  is well-posed if and only if the given problem admits the propagator satisfying \cref{def:FCP_Sol_Op}.

In order to demonstrate the concrete example of the propagator of \eqref{eq:FCP_DE}, \eqref{eq:FCP_BC}, let us for the moment assume that $A$ is bounded.
Then the inverse Laplace transform of the terms in \eqref{eq:FCP_Sol} can be evaluated explicitly.
It gives rise to the alternative formula for the inhomogeneous solution of \eqref{eq:FCP_DE}, \eqref{eq:FCP_BC}:
\begin{equation}
	\begin{aligned}\label{eq:FCP_ML_sol_repr}
		u(t)  = & \suml_{k=1}^{n}E_{\alpha,k}(-A t^\alpha) u_{k-1}
		+ \intl_0^t (t - s)^{\alpha - 1} E_{\alpha,\alpha}(-A (t - s)^\alpha) f(s)\, d s,
	\end{aligned}
\end{equation}
valid for the bounded linear operator $A$ and any $u_0, u_1 \in X$.
This representation of $u(t)$ in terms of linear combination of the two-parameter Mittag-Leffler functions $E_{\alpha,\beta}(z) = \suml_{k=0}^{\infty} \frac{z^k}{\Gamma(\alpha k + \beta)}$ is well-known in the field of ordinary fractional differential equations \cite{Podlubny1999,Kilbas2006,Balachandran2016}, where it has far-reaching theoretical and practical applications.
By the direct transformations of the series expansion for $E_{\alpha,1}(-A t^\alpha)$  it is easy to conclude that this operator from \eqref{eq:FCP_ML_sol_repr} satisfies the conditions \ref{def:FCP_Sol_Op_Id} and \ref{def:FCP_Sol_Op_Comm} of \cref{def:FCP_Sol_Op}.
The validity of \ref{def:FCP_Sol_Op_S} follows from the equivalence between \eqref{eq:FCP_Hom_Mild} and the homogeneous version of the given problem, which can be demonstrated by applying $J_\alpha$ to each part of \eqref{eq:FCP_DE}.
Therefore, for any finite $t \geq 0$ and bounded $A$ the function $E_{\alpha,1}(-A t^\alpha)$ is the propagator of \eqref{eq:FCP_DE}, \eqref{eq:FCP_BC} in the sense of \cref{def:FCP_Sol_Op}.
Note that its time derivative of $n$-th order has a singularity at $t=0$, caused by the structure of the kernel in \eqref{eq:FracDeriv_Caputo}.
This is a general property of the fractional order solution to problem \eqref{eq:FCP_DE}, \eqref{eq:FCP_BC}.

Unfortunately, the described notion of propagator has a limited practical utility for the evaluation of \eqref{eq:FCP_ML_sol_repr}, because it is ultimately confined to the case when all operator powers $A^k u_0$, $k \in \N$  are bounded.
Additionally, the convergence of the series for $E_{\alpha,\beta}(z)$ 
might be slow for certain combinations of $\alpha, \beta$, even if $z$ is a scalar \cite{Garrappa2015}.

Next, we would like to extend our analysis to the targeted class of strongly positive operators $A$.
At this point it is instructive to provide some background on the connection between the chosen range $(0, 2)$ for the fractional order parameter $\alpha$ and the aforementioned class of $A$.
As we already mentioned, the bulk of an existing research is devoted to the particular cases of \eqref{eq:FCP_DE} when $A$ is a strongly elliptic linear partial differential or, more generally, pseudo-differential operator with the domain $D(A)$ that is dense in $X$.
These cases are encompassed by the class of strongly positive operators considered in this work \cite{Bilyj2010,fujita}.
Another common property of the solutions to \eqref{eq:FCP_DE}, \eqref{eq:FCP_BC}, with $\alpha \in (0, 2)$,  is their decaying behavior as $t \rightarrow \infty$, typical for the classical first order Cauchy problem ($\alpha =1$).
The case $\alpha = 2$ is essentially different in nature, as it gives rise  to the non-decaying in time propagator $\cos{\sqrt{A}t}$ (see \cite{b_cp_Fattorini1984}), that is well defined only for a class of strip-type operators \cite{stripBade1953, batty2012bounded}.
The class of admissible $A$ becomes even more restrictive when the fractional order parameter $\alpha$ is greater than $2$.
Namely, for such $\alpha$ the operator $A$ should necessary be bounded, otherwise problem \eqref{eq:FCP_DE}, \eqref{eq:FCP_BC} is no longer properly defined \cite[Thm. 2.6]{Bazhlekova2001} (see also \cite[p. 99]{fattorini}).
Therefore, the range $\alpha \in (0, 2)$ is both natural and maximum possible for the targeted class of $A$.

\subsection{Contour-based definition of the propagator}
The extension of solution formula \eqref{eq:FCP_ML_sol_repr} to the class of sectorial operators requires a more general notion of the propagator $S_\alpha(t)$, formulated in terms of the contour representation which is directly compatible with \eqref{eq:FCP_Sol}.
The following lemma is based on the relevant results of \cite[Sec. 2.2]{Bazhlekova2001}.
Parts of its proof are reused in the sequel, hence for convenience of the reader we provide it in full detail.
\begin{lemma}\label{lem:FCP_S_repr}
	Assume that $\alpha \in (0,2)$.
	Let $A$ be a strongly sectorial operator with the spectral parameters $\rho_s$, $\varphi_s$.
	If $\beta \geq 1$, the operator function $S_{\alpha,\beta}(t)$:
	\begin{equation}\label{eq:FCP_SO_cont_repr}
		S_{\alpha,\beta}(t) x  = \frac{1}{2\pi i} \intl_{\Gamma_I}  e^{zt}z^{\alpha-\beta} (z^\alpha I + A)^{-1} x dz
	\end{equation}
	is well defined for any $x \in X$.
	Moreover, under these conditions, $S_\alpha(t) \equiv S_{\alpha,1}(t)$ is the propagator of \eqref{eq:FCP_DE}, \eqref{eq:FCP_BC}.
	The contour $\Gamma_I$ is chosen in such a way that the integral in \eqref{eq:FCP_SO_cont_repr}  is convergent and the curve $z^\alpha$, $z \in \Gamma_I$ is positively oriented with respect to $\mathrm{Sp}(-A)\cup \{0\}$.
\end{lemma}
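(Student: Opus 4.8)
The plan is to adapt the construction of \cite[Sec.~2.2]{Bazhlekova2001}: first fix an admissible contour $\Gamma_I$ and reduce the operator integral to a scalar majorant via \eqref{eq:ResSector}, then establish convergence (well-definedness) for $\beta\ge1$, and finally verify the three axioms \ref{def:FCP_Sol_Op_Id}--\ref{def:FCP_Sol_Op_S} of \cref{def:FCP_Sol_Op} for $S_\alpha=S_{\alpha,1}$. The integrand in \eqref{eq:FCP_SO_cont_repr} is singular exactly where $-z^\alpha\in\mathrm{Sp}(A)\subset\Sigma(\rho_s,\varphi_s)$, i.e. (principal branch) in the two sectors of half-opening $\varphi_s/\alpha$ about the rays $\arg z=\pm\pi/\alpha$, plus the branch point of $z^{\alpha-\beta}$ at $z=0$. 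I would therefore take $\Gamma_I$ to consist of two rays $\arg z=\pm\psi$ joined by a circular arc, oriented so that $z^\alpha$ winds around $\mathrm{Sp}(-A)\cup\{0\}$ positively, with $\psi$ in the window $\tfrac{\pi}{2}\le\psi<(\pi-\varphi_s)/\alpha$. The upper bound keeps $-z^\alpha$ off $\overline{\Sigma(\rho_s,\varphi_s)}$ so that \eqref{eq:ResSector} applies at $-z^\alpha$, giving $\|(z^\alpha I+A)^{-1}x\|\le M\|x\|/(1+|z|^\alpha)$; the lower bound ensures $\mathrm{Re}\,z\le0$ along the rays.

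With this bound the integrand of \eqref{eq:FCP_SO_cont_repr} is dominated by $C\,|e^{zt}|\,|z|^{-\beta}\|x\|$. For $\beta>1$ this majorant is integrable along $\Gamma_I$ for every $t\ge0$, so $S_{\alpha,\beta}(t)x\in X$ is well defined and $\|S_{\alpha,\beta}(t)\|$ is uniformly bounded on compact $t$-intervals. For the borderline $\beta=1$ the majorant is only $O(|z|^{-1})$, and I would use the factor $|e^{zt}|=e^{t\,\mathrm{Re}\,z}$, which decays along the rays whenever $\psi>\pi/2$, to secure absolute convergence for $t>0$; the value at $t=0$ is then obtained as the strong limit $t\to0^+$.

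For the axioms I would argue as follows. Property \ref{def:FCP_Sol_Op_Comm} is immediate: $(z^\alpha I+A)^{-1}$ maps into $D(A)$ and commutes with $A$ there, so closedness of $A$ lets one pull $A$ through the integral. For \ref{def:FCP_Sol_Op_Id} I would use the algebraic identity $z^{\alpha-1}(z^\alpha I+A)^{-1}=z^{-1}I-z^{-1}A(z^\alpha I+A)^{-1}$; the first term integrates to $I$ since $\frac{1}{2\pi i}\intl_{\Gamma_I}e^{zt}z^{-1}\,dz=1$, while for $x\in D(A)$ the second integrand is $O(|z|^{-1-\alpha})$ and tends to $0$ as $t\to0^+$, so $S_\alpha(0)=I$ on $D(A)$ and then on $X$ by density; equivalently $S_\alpha(0^+)=\lim_{z\to\infty}z^\alpha(z^\alpha I+A)^{-1}=I$, and strong continuity for $t>0$ follows from dominated convergence with the same majorant. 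Property \ref{def:FCP_Sol_Op_S} is cleanest via the Laplace transform: \eqref{eq:FCP_SO_cont_repr} identifies $\cL\{S_\alpha\}(z)=z^{\alpha-1}(z^\alpha I+A)^{-1}$, and transforming \eqref{eq:FCP_Hom_Mild} gives $(I+z^{-\alpha}A)\widehat u(z)=z^{-1}u_0$, whence $\widehat u(z)=z^{\alpha-1}(z^\alpha I+A)^{-1}u_0=\cL\{S_\alpha\}(z)u_0$; uniqueness of the Laplace transform yields $u(t)=S_\alpha(t)u_0$ for $u_0\in D(A)$. The same follows directly by inserting $A(z^\alpha I+A)^{-1}=I-z^\alpha(z^\alpha I+A)^{-1}$ into $J_\alpha A S_\alpha$ and interchanging $J_\alpha$ with the contour integral.

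I expect the construction of $\Gamma_I$ in the sub-hyperbolic range $\alpha\in(1,2)$ to be the main obstacle. The admissibility window $\tfrac{\pi}{2}\le\psi<(\pi-\varphi_s)/\alpha$ is non-empty only when $\varphi_s<\pi(2-\alpha)/2$; for $\alpha\le1$ this holds automatically for every $\varphi_s<\pi/2$ and the contour sits in the open left half-plane, but for $\alpha\in(1,2)$ with $\varphi_s$ close to $\pi/2$ the two spectral sectors (centred at $\arg z=\pm\pi/\alpha\in(\pi/2,\pi)$) leave no ray with $\mathrm{Re}\,z\le0$, so no contour making the integral convergent exists. This compatibility between $\alpha$ and $\varphi_s$ is exactly what the closing stipulation of the lemma---that $\Gamma_I$ be chosen so that the integral converges---tacitly requires, and keeping the convergence estimates uniform in $t\in[0,T]$ across the admissible $(\alpha,\varphi_s)$ is the delicate part; the remaining verifications are routine.
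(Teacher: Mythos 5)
Your overall strategy is the one the paper follows: reduce \eqref{eq:FCP_SO_cont_repr} to a scalar majorant via \eqref{eq:ResSector}, get absolute convergence from the $|z|^{-\beta}$ decay for $\beta>1$, handle $\beta=1$ by splitting off $z^{-1}I$ (your identity $z^{\alpha-1}(z^\alpha I+A)^{-1}=z^{-1}I-z^{-1}A(z^\alpha I+A)^{-1}$ is exactly the paper's use of \eqref{eq:ResCor} with $m=0$, i.e. $S_\alpha(t)x=x-S_{\alpha,1+\alpha}(t)Ax$ on $D(A)$, extended by density and closedness), and delegate \ref{def:FCP_Sol_Op_Comm}--\ref{def:FCP_Sol_Op_S} to Pr\"uss's generation theory; the paper cites Theorems 1.3, 2.1 and Example 2.1 of \cite{Pruess2013} where you sketch the equivalent Laplace-transform computation. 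So the verification of the propagator axioms and the $\beta>1$ case match the paper essentially step for step.

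The one place you genuinely diverge is the contour, and your version is the more careful one. The paper takes the rays of $\Gamma_I$ at angles $\pm(\pi-\varphi_s)$ (parallel to $\partial(-\Sigma(\rho_s,\varphi_s))$) and then applies \eqref{eq:ResSector} at $z^\alpha$; but for $z$ on such a ray one has $|\arg(-z^\alpha)|=|\pi-\alpha(\pi-\varphi_s)|$, which is $\ge\varphi_s$ only when $\alpha\le 1$ (or when $\alpha\ge(\pi+\varphi_s)/(\pi-\varphi_s)$, in which case $z^\alpha$ winds past the cut and the paper's no-self-intersection claim fails). In other words, the resolvent-bound step in the paper's own estimate \eqref{eq:FCP_SO_norm_est} is justified only in the sub-parabolic range with that contour, and the compatibility window you isolate, $\pi/2\le\psi<(\pi-\varphi_s)/\alpha$, nonempty iff $\varphi_s<\pi(2-\alpha)/2$, is not an artifact of your construction: it is the known spectral-angle condition for $-A$ to generate an analytic $\alpha$-resolvent (cf. the generation and subordination theorems in \cite{Bazhlekova2001}), automatic for $\alpha\le1$ but a genuine restriction for $\alpha\in(1,2)$. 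When it fails, $-z^\alpha$ enters $\Sigma(\rho_s,\varphi_s)$ already for $z$ on the imaginary axis, so no admissible $\Gamma_I$ exists and the lemma cannot hold in the stated generality; the paper's closing remark that the axioms follow ``because of \eqref{eq:ResSector} and the inequality $\alpha\varphi_s<\pi$'' does not address this, since $\alpha\varphi_s<\pi$ is automatic and is not the relevant condition. So your proposal is correct where the statement is provable, and the ``main obstacle'' you flag is a real gap in the paper's argument for $\alpha\in(1,2)$ with $\varphi_s\ge\pi(2-\alpha)/2$, not a defect of your approach.
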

\begin{proof}
	First, let us check that the operator function $S_{\alpha,\beta}$ is well-defined for any $\beta > 1$ and $t \geq 0$.
	To show this, it is enough to demonstrate that the integral in \eqref{eq:FCP_SO_cont_repr} is uniformly convergent with respect to $t$.
	From definition \eqref{eq:FCP_SO_cont_repr}, one can get
	\[		\| 	S_{\alpha,\beta}(t)x\|
		= \frac{1}{2\pi} \left\| \intl_{\Gamma_I}  {e^{zt}z^{\alpha-\beta}} (z^\alpha I + A)^{-1} x \, dz \right \|
		\leq C_1 \intl_{\Gamma_I}  \left| e^{zt}z^{\alpha-\beta} \right| \left\| R (z^\alpha,-A) x\right \| \, dz.
	\]
	We apply the resolvent bound from \eqref{eq:ResSector} to the last integral and obtain the estimate
	\begin{equation}\label{eq:FCP_SO_norm_est}
		\| 	S_{\alpha,\beta}(t)x\|
		\leq  C \|x\|\intl_{\Gamma_I}  \frac{ |z|^{\alpha-\beta}e^{t\Re{z}} }{1+|z|^\alpha} \, dz
		\leq  C \|x\|\sup\limits_{z \in \Gamma_I} e^{t\Re{z}}\intl_{\Gamma_I}  \frac{ |z|^{\alpha-\beta}}{1+|z|^\alpha} \, dz, \  \forall x \in X.
	\end{equation}
	Here and below,
	$C>0$ denotes a generic finite constant.
	The integral in \eqref{eq:FCP_SO_norm_est} is absolutely convergent for arbitrary $\beta>1$ since the integrand is bounded and decays as $|z|^{-\beta}$ when $z$ is going to infinity on the contour $\Gamma_I$.
	In order to convert \eqref{eq:FCP_SO_norm_est} into the proper bound, we choose $\Gamma_I$ to be a composition of two rays $-\rho e^{\pm i \varphi_s}$, $\rho \in [r,+\infty]$,  that are parallel to the boundary of $-\Sigma(\rho_s, \varphi_s)$, and a longer arc of the circle $re^{is}$, $s \in [-\pi + \varphi_s, \pi - \varphi_s]$.
	Recall that, the function $z^\alpha$ maps the sector $\Sigma(0, \varphi_s)$ into the sector $\Sigma(0, \alpha \varphi_s)$,	thus the condition $\varphi_s < \pi/2$ implies that the curve $z^\alpha$, $z \in \Gamma_I$ is free of self-intersections for all $\alpha \in (0,2)$.
	Then for any given $1< r < \infty$, the last integral in \eqref{eq:FCP_SO_norm_est} can be estimated as
	\[
		\begin{aligned}
			\intl_{\Gamma_I}  \frac{ |z|^{\alpha-\beta}}{1+|z|^\alpha} \, dz & = 2\intl_{r}^{\infty} \frac{ |\rho e^{i \varphi_s}|^{\alpha-\beta}}{1+|\rho e^{i \varphi_s}|^\alpha}  \, d\rho
			+  \intl_{-\pi + \varphi_s}^{\pi-\varphi_s}  \frac{ |r e^{i s}|^{\alpha-\beta}}{1+|r e^{i s}|^\alpha} \, ds                                                                                                                                          \\
			                                                                 & = 2\intl_{r}^{\infty} \frac{\rho^{\alpha-\beta}}{1+ \rho^\alpha} \, d\rho
			+  \intl_{-\pi + \varphi_s}^{\pi - \varphi_s}  \frac{ r^{\alpha-\beta}}{1+r^\alpha} \, ds
			\le 2\intl_{r}^{\infty} \rho^{-\beta} d\rho
			+ \left . \frac{s}{r ^{\beta}} \right|_{-\pi + \varphi_s}^{\pi - \varphi_s}                                                                                                                                                                          \\
			                                                                 & = \frac{2(\pi - \varphi_s)}{r^{\beta}} -  2\left. \frac{\rho^{1-\beta}}{\beta-1}  \right |_{r}^{+\infty}  = \frac{2(\pi - \varphi_s)}{r^{\beta}}  + \frac{2r^{1-\beta}}{\beta-1}.
		\end{aligned}
	\]
	Additionally, the value of $\sup\limits_{z \in \Gamma_I} e^{t\Re{z}}$ from \eqref{eq:FCP_SO_norm_est} is equal to $e^{rt}$, with $r$ being the arc radius of the chosen $\Gamma_I$.
	This observation leads us to the estimate
	\begin{equation}\label{eq:FracIntEstbeta}
		\| 	S_{\alpha,\beta}(t)x\| 	\leq  C \frac{e^{rt}}{r^{\beta-1}}\left( \frac{1}{\beta-1} + \frac{(\pi - \varphi_s)}{r}   \right ) \|x\|, \quad C > 0, \ r > 1,
	\end{equation}
	which clearly shows that $\| S_{\alpha,\beta} x \|$ is bounded for $\beta>1$.
	Furthermore, due to the fact that the dependence on $t$ in \eqref{eq:FCP_SO_norm_est} and  \eqref{eq:FracIntEstbeta} is expressed only by the scalar factor $e^{rt}$, the integral in formula \eqref{eq:FCP_SO_cont_repr} convergences uniformly in $t \in [0,T]$.

	In order to treat $S_{\alpha}(t)$ we recall that for any $x \in D(A^{m+1})$, the resolvent $R(z,A)x$ satisfies the identity (2.25) from \cite{bGavrilyuk2011}:
	\begin{equation}\label{eq:ResCor}
		R(z,A)x = \sum\limits_{k=1}^{m+1} \frac{A^{k-1}}{z^k}x + \frac{1}{z^{m+1}}R(z,A) A^{m+1} x,
		\quad m \in \Z_+.
	\end{equation}
	Now, assume that $t$ is strictly positive.
	Together with \eqref{eq:ResSector}, such an assumption guarantees the strong convergence of the integral in \eqref{eq:FCP_SO_cont_repr} on the contour $\Gamma_I$, described above.
	Then, by applying identity \eqref{eq:ResCor} with $m=0$ to this convergent representation of $S_\alpha(t)$, we obtain
	\[
		\begin{aligned}
			S_\alpha(t)x
			 & =  \frac{1}{2\pi i} \intl_{\Gamma_I}   \frac{\e^{zt}}{z} x dz - \frac{1}{2\pi i} \intl_{\Gamma_I}  \frac{e^{zt}}{z} (z^\alpha I + A)^{-1} Ax \, dz
			= x - S_{\alpha,1+\alpha} (t)Ax,
		\end{aligned}
	\]
	for any $x \in D(A)$.
	The term $ S_{\alpha,1+\alpha}(t) Ax$ satisfies estimate \eqref{eq:FracIntEstbeta} with $\beta = 1 + \alpha$ and any $r>1$.
	Consequently, the last formula can be used to extend the definition of the propagator $S_\alpha(t)$ to the closed interval $t \in [0,T]$, where it remains bounded and strongly continuous.
	The generalization of this result to all $x \in X$ follows from the dense embedding $D(A) \subseteq X$, and the closedness of $A$ \cite{Haase2006}.
	Additionally, bound \eqref{eq:FracIntEstbeta} stipulates that the norm of $ S_{\alpha,1+\alpha}(0) Ax$  can be made arbitrary small by increasing $r$. 
	It implies $S_\alpha(0)x = x$, for any $x \in D(A)$.
	The equality $S_\alpha(0) = I$ on $X$ follows from the strong continuity of $S_\alpha(t)$ established earlier.
	This concludes the proof of \ref{def:FCP_Sol_Op_Id} from Definition \ref{def:FCP_Sol_Op}.

	To show the validity of \ref{def:FCP_Sol_Op_Comm} and \ref{def:FCP_Sol_Op_S}, we proceed by establishing the correspondence between $S_\alpha(t)$ and the solution operator
	\[
		H(a,B,t) = \frac{1}{2\pi i} \intl_{\Gamma_I}  \frac{e^{zt}}{z} \left(I - \widehat{g_\alpha}(z)B \right)^{-1} dz,
	\]
	studied by Pr\"uss \cite{Pruess2013} in the context of a more general integral equation
	\[
		v(t) = v_0 + \intl_0^t g_\alpha(t-s)B v(s) ds.
	\]
	In the case of \eqref{eq:FCP_Hom_Mild}, we have $B=-A$, $g_\alpha(t) =  {t^{\alpha-1}}/{\Gamma(\alpha)}$, $v_0 = u(0)$, so
	\begin{equation*}
		\begin{aligned}
			u(t) = & H\left ( \frac{t^{\alpha-1}}{\Gamma(\alpha)}, -A,  t \right )u(0)
			= \frac{1}{2\pi i} \intl_{\Gamma_I}  \frac{e^{zt}}{z} \left(I + z^{-\alpha}A \right)^{-1} u(0) dz          \\
			=      & \frac{1}{2\pi i} \intl_{\Gamma_I}  e^{zt} z^{\alpha-1}\left(Iz^{\alpha} + A \right)^{-1} u(0) dz,
		\end{aligned}
	\end{equation*}
	according to Theorem 1.3 from \cite{Pruess2013}.
	Now, the validity of \ref{def:FCP_Sol_Op_Comm} and \ref{def:FCP_Sol_Op_S} follows from the results of Theorem 2.1, Example 2.1 of \cite{Pruess2013}, that are applicable because of the bound on $\left(Iz^{\alpha} + A \right)^{-1}$, imposed by \eqref{eq:ResSector}, and the inequality $\alpha \varphi_s < \pi$.
\end{proof}
We point out that the propagator $S_{\alpha,\beta}(t)$  from \cref{lem:FCP_S_repr} is defined only for $\beta \geq 1$, although its representation \eqref{eq:FCP_SO_cont_repr} is formally equivalent to the integral formula for $E_{\alpha,\beta}(-A t^\alpha)$ with other admissible $\beta$ \cite{Bazhlekova2001}.
Our motivation for doing so is to distinguish the precise meaning of propagator from a more general object $E_{\alpha,\beta}(-A t^\alpha)$, while still being able to cover all cases of $\alpha$, $\beta$  relevant to the given problem in its full generality.

The part of the above proof connected with bound \eqref{eq:FracIntEstbeta} will become instrumental in the forthcoming analysis.
We state it as a corollary.
\begin{corollary}\label{thm:FCP_S_prop_b2}
	Let the operator $A$ and the fractional order parameter $\alpha$ fulfill the assumptions of \cref{lem:FCP_S_repr}. If $\beta > 1$, then the integral from definition \eqref{eq:FCP_SO_cont_repr} of the operator function $S_{\alpha,\beta}(t)x$  is strongly convergent for any $t \in [0, T]$  and $x \in X$,  moreover, $S_{\alpha,\beta}(0)x = 0$.
\end{corollary}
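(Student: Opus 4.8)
The plan is to extract everything from the first portion of the proof of \cref{lem:FCP_S_repr}, where the operator function $S_{\alpha,\beta}(t)$ was already analysed for $\beta > 1$. First I would recall that the norm estimate \eqref{eq:FCP_SO_norm_est} was derived for arbitrary $x \in X$ using only the resolvent bound \eqref{eq:ResSector}, which holds on all of $X$; no membership in $D(A)$ or in any fractional power domain is required. Since the dominating scalar integral $\intl_{\Gamma_I} |z|^{\alpha-\beta}(1+|z|^\alpha)^{-1}\, dz$ converges absolutely whenever $\beta > 1$ (its integrand decays like $|z|^{-\beta}$ along the rays of $\Gamma_I$), the $X$-valued integral in \eqref{eq:FCP_SO_cont_repr} converges absolutely in the norm of $X$, hence converges, for every fixed $x$. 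This is precisely the asserted strong convergence. The uniformity in $t \in [0,T]$ then follows because, as already noted in the proof of \cref{lem:FCP_S_repr}, the $t$-dependence enters \eqref{eq:FracIntEstbeta} only through the scalar factor $e^{rt} \leq e^{rT}$.

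It remains to establish $S_{\alpha,\beta}(0)x = 0$. The key point I would emphasise is that the value of the contour integral in \eqref{eq:FCP_SO_cont_repr} is independent of the arc radius $r > 1$ used to construct $\Gamma_I$. Indeed, two such contours corresponding to radii $r$ and $r'$ differ only within a bounded region, throughout which the integrand $e^{zt}z^{\alpha-\beta}(z^\alpha I + A)^{-1}x$ is holomorphic: the factor $z^{\alpha-\beta}$ avoids the branch cut along the negative real axis (the arc of $\Gamma_I$ spans $s \in [-\pi+\varphi_s, \pi-\varphi_s]$ and never reaches angle $\pm\pi$), while $(z^\alpha I + A)^{-1}$ is holomorphic because $z^\alpha$ stays in the resolvent region of $-A$. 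Cauchy's integral theorem then yields the desired $r$-independence.

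With this invariance in hand, I would evaluate bound \eqref{eq:FracIntEstbeta} at $t = 0$, obtaining
\begin{equation*}
	\| S_{\alpha,\beta}(0)x\| \leq C \frac{1}{r^{\beta-1}}\left( \frac{1}{\beta-1} + \frac{(\pi - \varphi_s)}{r} \right) \|x\|, \quad r > 1 .
\end{equation*}
The left-hand side does not depend on $r$, whereas, since $\beta > 1$, the right-hand side tends to $0$ as $r \to \infty$. Therefore $\|S_{\alpha,\beta}(0)x\| = 0$, i.e. $S_{\alpha,\beta}(0)x = 0$ for every $x \in X$, which completes the argument.

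The only genuinely delicate step is the $r$-independence of the integral; everything else is a transcription of estimates already obtained in \cref{lem:FCP_S_repr}. I expect the main care to be needed in verifying that varying $r$ keeps the entire deformation region inside the common domain of holomorphy of both $z^{\alpha-\beta}$ and the resolvent, so that Cauchy's theorem legitimately applies — an issue that is treated as routine in the proof of \cref{lem:FCP_S_repr}, where the same reasoning is invoked for the special value $\beta = 1 + \alpha$.
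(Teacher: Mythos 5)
Your proposal is correct and follows essentially the same route as the paper: the corollary is extracted verbatim from the first part of the proof of \cref{lem:FCP_S_repr}, where absolute convergence for $\beta>1$ is read off from \eqref{eq:FCP_SO_norm_est}--\eqref{eq:FracIntEstbeta} and $S_{\alpha,\beta}(0)x=0$ is obtained by letting $r\to\infty$ in \eqref{eq:FracIntEstbeta} at $t=0$. The only difference is that you spell out the $r$-independence of the contour integral via Cauchy's theorem, which the paper leaves implicit.
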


By slightly abusing the notation of \cref{lem:FCP_S_repr}, we can rewrite \eqref{eq:FCP_Sol} in a more compact form:
\begin{equation}
	\begin{aligned}\label{eq:FCP_S_sol_repr}
		u(t)  = & \suml_{k=1}^{n}S_{\alpha,k}(t) u_{k-1}
		+ \intl_0^t (t - s)^{\alpha - 1} S_{\alpha,\alpha}(t - s) f(s)\, d s, \quad
	\end{aligned}
\end{equation}
The rigorous proof of this solution formula was provided in \cite{Mendes2013,Li2015}.
Our aim here is to investigate the conditions on $u_0, u_1$ and $f(t)$ needed to maintain the strong convergence of the integral representations for $S_{\alpha}(t) u_0$, $S_{\alpha,2}(t) u_1$ and $S_{\alpha,\alpha}(t-s) f(s)$ for any $t \in [0,T]$.
To understand why this investigation is important let us recall that
$n$-th derivative of solution to the given fractional Cauchy problem is unbounded at $t = 0$.
Such behavior of $u(t)$ has a profound impact on the properties of the finite-difference methods for \eqref{eq:FCP_DE}, \eqref{eq:FCP_BC}.
Their convergence order is typically limited by $n$, even for the multi-step methods (see \cite{Garrappa2015a,Stynes_2019} and the references therein).
Besides that, at each time-step, these methods need to query certain part (or all) of the solution history in order to evaluate $\partial_t^\alpha u(t)$  numerically, which makes them computationally costly and memory constrained \cite{Diethelm2020}.

The mentioned drawbacks of finite-difference methods has put an additional spotlight on the alternative numerical solution evaluation strategies \cite{baffet2017high,Guo2019,Khristenko2021} and, in particular, on the quadrature-based numerical methods \cite{McLean2010,Vasylyk2022,Colbrook2022a,Sytnyk2023}.
Unlike finite-differences, the methods from the later class perform the direct numerical evaluation of  \eqref{eq:FCP_S_sol_repr} via the quadrature of the integral from formula \eqref{eq:FCP_SO_cont_repr}, defining the operator function $S_{\alpha,\beta}(t)$.
Such strategy offers a unique combination of advantages, including multi-level parallelism, concurrent evaluation of solution for different values of $t$ and, most notably, exponential convergence of the approximation for any given $t$, including $t = 0$.
The last property is contingent upon the strong convergence of the contour integral in definition \eqref{eq:FCP_SO_cont_repr}.

We observe that the strong convergence of the integral definition for $S_{\alpha,2}(t)u_1$ with any $u_1 \in X$ and  $t \in [0, T]$, established by the \cref{thm:FCP_S_prop_b2}, can not be proved in general for two other components $S_{\alpha}(t) u_0$ and $S_{\alpha,\alpha}(t-s) f(s)$ of \eqref{eq:FCP_S_sol_repr}.
The convergence of the respective integrals degrade when the scalar argument of $S_{\alpha}(t)$ or $S_{\alpha,\alpha}(t-s)$  approaches $0$, as shown by \eqref{eq:FCP_SO_norm_est}.
In the proof of \cref{lem:FCP_S_repr}, we demonstrated that the strong convergence for $S_{\alpha}(0) u_0$ can be reestablished by redefining the formula for propagator via the identity $S_{\alpha}(t) u_0 = u_0 - S_{\alpha,\alpha+1} (t) Au_0$, valid for $u_0 \in D(A)$.
The next proposition supplies a more refined version of such technique.
\begin{proposition}[\cite{Sytnyk2023}]\label{prop:FCP_res_cor}
	Let  $A$ be the sectorial operator satisfying the assumptions of \cref{lem:FCP_S_repr}.
	If $x \in D(A^{m + \gamma})$ and  $z^\alpha \notin \mathrm{Sp}(A) \cup \{0\}$, then
	for any $\gamma \geq 0$
	\begin{equation}\label{eq:FCP_res_cor_norm_est}
		\left\| z^{\alpha-\beta}(z^\alpha I-A)^{-1} x - \frac{1}{z^\beta}\suml_{k=0}^{m}\frac{A^k x}{z^{\alpha k}}\right \|
		\leq \frac{K(1+M) \left \|A^{m+\gamma} x \right \|}{|z|^{m\alpha + \beta}(1+|z|^\alpha)^\gamma} ,
	\end{equation}
	with some constant  $K > 0$ and $M$ defined by \eqref{eq:ResSector}.
\end{proposition}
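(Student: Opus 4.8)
The plan is to reduce \eqref{eq:FCP_res_cor_norm_est} to a single remainder term produced by the resolvent correction identity \eqref{eq:ResCor}, and then to control that term by interpolating between two elementary resolvent bounds. First I would set $z\mapsto z^\alpha$ in \eqref{eq:ResCor} (legitimate since $z^\alpha\notin \mathrm{Sp}(A)\cup\{0\}$), multiply through by $z^{\alpha-\beta}$, and reindex the finite sum via $j=k-1$. The leading terms then reproduce exactly $\frac{1}{z^\beta}\suml_{k=0}^{m} A^k x/z^{\alpha k}$, so that for $x\in D(A^{m+1})$ the bracketed quantity in \eqref{eq:FCP_res_cor_norm_est} collapses to
\begin{equation*}
	z^{\alpha-\beta}R(z^\alpha,A)x - \frac{1}{z^\beta}\suml_{k=0}^{m}\frac{A^k x}{z^{\alpha k}}
	= \frac{1}{z^{\alpha m+\beta}}R(z^\alpha,A)A^{m+1}x
	= \frac{1}{z^{\alpha m+\beta}}\left(z^\alpha R(z^\alpha,A)-I\right)A^m x,
\end{equation*}
where the last equality uses $R(z^\alpha,A)A = z^\alpha R(z^\alpha,A)-I$. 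Since the right-hand side depends on $x$ only through $A^m x$ and the operator $z^\alpha R(z^\alpha,A)-I$ is bounded, this identity extends by density from $D(A^{m+1})$ to every $x\in D(A^m)$, which is all the regularity the estimate ultimately requires.

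Next I would record the two elementary bounds for the bounded operator $z^\alpha R(z^\alpha,A)-I = A R(z^\alpha,A)$. Replacing $z$ by $z^\alpha$ in \eqref{eq:ResSector} gives $\|R(z^\alpha,A)y\|\leq M\|y\|/(1+|z|^\alpha)$, and the same bound yields $\|z^\alpha R(z^\alpha,A)\|\leq M|z|^\alpha/(1+|z|^\alpha)\leq M$, hence $\|(z^\alpha R(z^\alpha,A)-I)y\|\leq (1+M)\|y\|$. Combined with the remainder identity, these are already the endpoint cases $\gamma=1$ and $\gamma=0$ of the claim, producing the factors $(1+|z|^\alpha)^{-1}$ and $(1+|z|^\alpha)^{0}$, respectively.

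For intermediate $\gamma\in(0,1)$ I would interpolate. Put $y:=A^{m+\gamma}x$ and $v:=R(z^\alpha,A)y\in D(A)$; using that the fractional powers of $A$ commute with $R(z^\alpha,A)$ and satisfy the additivity law $A^{1-\gamma}A^\gamma=A$ on the appropriate domains, the remainder rewrites as
\begin{equation*}
	\left(z^\alpha R(z^\alpha,A)-I\right)A^m x = A R(z^\alpha,A)A^m x = A^{1-\gamma}R(z^\alpha,A)A^{m+\gamma}x = A^{1-\gamma}v.
\end{equation*}
The moment inequality for the strongly positive operator $A$ (see, e.g., \cite{Haase2006}), $\|A^{1-\gamma}v\|\leq C\|Av\|^{1-\gamma}\|v\|^{\gamma}$, together with the two bounds above ($\|Av\|=\|(z^\alpha R(z^\alpha,A)-I)y\|\leq(1+M)\|y\|$ and $\|v\|\leq M\|y\|/(1+|z|^\alpha)$) gives $\|A^{1-\gamma}v\|\leq C(1+M)^{1-\gamma}M^{\gamma}\|y\|/(1+|z|^\alpha)^{\gamma}\leq C(1+M)\|y\|/(1+|z|^\alpha)^{\gamma}$. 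Multiplying by $|z|^{-(\alpha m+\beta)}$ and recalling $y=A^{m+\gamma}x$ yields \eqref{eq:FCP_res_cor_norm_est} with $K$ equal to the moment-inequality constant.

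I expect the main obstacle to be the fractional-power bookkeeping rather than any single estimate: one must justify $A R(z^\alpha,A)A^m x = A^{1-\gamma}R(z^\alpha,A)A^{m+\gamma}x$ through the commutation of $A^\gamma$ with the resolvent and the additivity $A^\gamma A^m x = A^{m+\gamma}x$, both valid for strongly positive operators but only on the correct domains, and one must invoke the moment inequality, which confines the interpolation to the pertinent range $\gamma\in[0,1]$ (the endpoints following directly from the two resolvent bounds).
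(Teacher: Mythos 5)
The paper does not actually prove this proposition --- it is imported verbatim from the companion work \cite{Sytnyk2023} --- so there is no in-text argument to compare against; the surrounding discussion only signals that the result is ``a more refined version'' of the technique based on \eqref{eq:ResCor}. Your proof follows exactly that route and is sound: substituting $z^\alpha$ into \eqref{eq:ResCor}, multiplying by $z^{\alpha-\beta}$ and reindexing does collapse the left-hand side of \eqref{eq:FCP_res_cor_norm_est} to the single remainder $z^{-(\alpha m+\beta)}\bigl(z^\alpha R(z^\alpha,A)-I\bigr)A^m x$, the two endpoint bounds $\|R(z^\alpha,A)y\|\leq M\|y\|/(1+|z|^\alpha)$ and $\|(z^\alpha R(z^\alpha,A)-I)y\|\leq(1+M)\|y\|$ are correct, and the moment inequality $\|A^{1-\gamma}v\|\leq C\|Av\|^{1-\gamma}\|v\|^{\gamma}$ legitimately interpolates between them; the fractional-power bookkeeping ($A^\gamma$ commuting with the resolvent, additivity of powers) is standard for strongly positive $A$. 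Two remarks. First, your argument (like the statement itself) tacitly requires $z^\alpha$ to lie outside $\Sigma(\rho_s,\varphi_s)$ so that \eqref{eq:ResSector} applies at $z^\alpha$; the hypothesis $z^\alpha\notin\mathrm{Sp}(A)\cup\{0\}$ alone does not give the resolvent bound, but this matches how the paper uses the proposition on the contour $\Gamma_I$. Second, you prove the estimate only for $\gamma\in[0,1]$, whereas the proposition claims ``any $\gamma\geq0$''; this is not a defect of your proof but of the statement --- for fixed $m$ and $\gamma>1$ the scalar case $A=a>0$, $m=0$ already violates \eqref{eq:FCP_res_cor_norm_est} at large $|z|$, and the only use made of the result (in \cref{thm:FCP_Saa_sc}, with $m=\lfloor\delta\rfloor$ and $\gamma=\delta-m$) stays within $[0,1)$. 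It would be worth stating that restriction explicitly rather than leaving it as a closing caveat.
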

This result allows us to relax the space regularity assumptions for $S_{\alpha,1}(t) u_0$ to $u_0 \in D(A^\gamma)$, with some $\gamma>0$, and still maintain the strong convergence of its definition \eqref{eq:FCP_SO_cont_repr} for $t \in [0,T]$.
In the case of $S_{\alpha,\alpha}(t-s)f(s)$, we have a more ambiguous situation, since the integrand in \eqref{eq:FCP_SO_cont_repr} for $\beta = \alpha$ does not have the singularity at $z = 0$.
Thus, if we assume that $\mathrm{Sp}(A)$ is separated from the origin, the integration contour $\Gamma_I$ can be chosen to reside entirely in the same half-plane as $\mathrm{Sp}(-A)$.
The strong convergence of $S_{\alpha,\alpha}(t-s)f(s)$ for the targeted range of $t,s$ can be achieved via \eqref{eq:ResCor} only for such a subclass of sectorial operators.
\begin{lemma}\label{thm:FCP_Saa_sc}
	Assume that operator $A$  satisfies the assumptions of \cref{lem:FCP_S_repr} and $f \in L^1([0, T], X)$.
	The operator function $S_{\alpha,\alpha}(t)$ is well-defined with any $\alpha \in (0,2)$ and the corresponding integral for $S_{\alpha,\alpha}(t-s)f(s)$ is strongly-convergent for any $t \in [0, T]$ and all $s \in [0, t]$, if there exist $\delta > \frac{1-\alpha}{\alpha}$, such that $f(t) \in D(A^\delta)$ and the spectrum $\mathrm{Sp}(A)$ is separated from the origin, i.e. $\rho_s > \rho_0$ for some $\rho_0>0$.
	If  $\mathrm{Sp}(A)$ is arbitrarily close to $0$, then the restriction $f(s) \equiv 0$ is necessary to obtain the strong convergence of $S_{\alpha,\alpha}(t-s)f(s)$ via \eqref{eq:ResCor}.
\end{lemma}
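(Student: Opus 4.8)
The plan is to reduce the whole statement to estimating the contour integral in \eqref{eq:FCP_SO_cont_repr} with $\beta=\alpha$ and argument $\tau=t-s\geq 0$, and then to isolate the single genuinely problematic configuration, namely the endpoint $\tau=0$ (equivalently $s=t$) together with $\alpha\leq 1$. First I would dispose of $\tau>0$: on the two rays of $\Gamma_I$ one has $\Re z\to -\infty$, so the factor $e^{z\tau}$ decays exponentially and dominates the algebraic growth of the resolvent, whose norm is bounded by $M/(1+|z|^\alpha)$ through \eqref{eq:ResSector}. Hence the integrand is absolutely integrable and $S_{\alpha,\alpha}(\tau)$ is well defined for every $\alpha\in(0,2)$ and every $\tau>0$, exactly along the lines of \eqref{eq:FCP_SO_norm_est}--\eqref{eq:FracIntEstbeta}. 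The substance of the lemma therefore lies entirely in extending this to the closed endpoint $\tau=0$, where $e^{z\tau}\equiv 1$ provides no decay and the bare integrand decays only like $|z|^{-\alpha}$, failing to be integrable for $\alpha\leq 1$.

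For that endpoint I would invoke the resolvent correction \cref{prop:FCP_res_cor} (equivalently \eqref{eq:ResCor}) with $\beta=\alpha$ and $m=0$, writing $(z^\alpha I+A)^{-1}f(s)=z^{-\alpha}f(s)+E(z)f(s)$, where, for $f(s)\in D(A^\delta)$, one has $\|E(z)f(s)\|\leq K(1+M)\|A^\delta f(s)\|\,|z|^{-\alpha}(1+|z|^\alpha)^{-\delta}$. The remainder contributes the integrand $e^{z\tau}E(z)f(s)$, whose norm decays like $|z|^{-\alpha(1+\delta)}$ on the rays even at $\tau=0$; this is absolutely integrable precisely when $\alpha(1+\delta)>1$, i.e.\ $\delta>\tfrac{1-\alpha}{\alpha}$, and the resulting bound is uniform for $\tau\in[0,T]$. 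This is exactly the trade-off quantified in the statement, and it is the step that forces the space-regularity threshold $\delta>\tfrac{1-\alpha}{\alpha}$ and the membership $f(t)\in D(A^\delta)$.

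It then remains to control the scalar leading term $\tfrac{1}{2\pi i}\int_{\Gamma_I}e^{z\tau}z^{-\alpha}\,dz\,f(s)$, and this is where the location of $\mathrm{Sp}(A)$ enters and where I expect the main obstacle. When $\rho_s>\rho_0>0$, the singularities of $(z^\alpha I+A)^{-1}$, located at $z^\alpha\in\mathrm{Sp}(-A)$, stay at distance at least $\rho_0^{1/\alpha}$ from the origin, and since for $\beta=\alpha$ the prefactor $z^{\alpha-\beta}\equiv 1$ carries no branch point at $z=0$, I can choose $\Gamma_I$ to encircle $\mathrm{Sp}(-A)$ but not the origin, keeping $z^{-\alpha}$ single-valued along and inside it. A contour-deformation argument then shows that the scalar contribution picks up no singularity from $z=0$, leaving only the absolutely convergent remainder above; this yields strong convergence of $S_{\alpha,\alpha}(t-s)f(s)$ for all $s\in[0,t]$ and $t\in[0,T]$. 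Conversely, if $\mathrm{Sp}(A)$ approaches the origin, every admissible $\Gamma_I$ must wind around $z=0$, so the scalar term reduces to the Hankel representation of $\mathcal{L}^{-1}\{z^{-\alpha}\}=\tau^{\alpha-1}/\Gamma(\alpha)$, which is singular at $\tau=0$ for $\alpha<1$; as the remainder remains finite, the sum diverges at $s=t$ unless $f(s)\equiv 0$, establishing the asserted necessity. The delicate point throughout is the rigorous justification of the contour choice and the vanishing of the origin contribution when $\rho_s>0$; this, rather than any of the routine absolute estimates, is the genuine obstacle and must be handled by deformation rather than by naive majorization.
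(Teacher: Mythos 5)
Your argument is correct and follows essentially the same route as the paper's proof: split off the singular scalar part of the resolvent via \eqref{eq:ResCor}/\cref{prop:FCP_res_cor}, bound the remainder to obtain the threshold $\delta>\frac{1-\alpha}{\alpha}$, annihilate the scalar term by Cauchy's theorem when the contour need not encircle the origin, and identify it with $\mathcal{L}^{-1}\{z^{-\alpha}\}=(t-s)^{\alpha-1}/\Gamma(\alpha)$, singular at $s=t$, when it must. The only cosmetic difference is that you apply the correction identity with $m=0$ and fractional exponent $\delta$, whereas the paper takes $m=\lfloor\delta\rfloor$; both yield the same $|z|^{-\alpha(1+\delta)}$ decay and hence the same condition.
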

\begin{proof}
	Let us first deal with the situation when $\rho_s \geq \rho_0 > 0$.
	If $\alpha >1$, the sought convergence readily follows from \eqref{eq:FCP_SO_norm_est}.
	Thus, we focus on $\alpha \leq 1$.
	If $f(s) \in D(A^\delta)$,
	\begin{equation}\label{eq:FCP_Saa_ge}
		\begin{aligned}
			\|S_{\alpha,\alpha}(t-s) f(s) \|
			=    & \left\| \frac{1}{2\pi i} \intl_{\Gamma_I}  e^{z(t - s)}(z^\alpha I + A)^{-1} f(s) \, dz \right\|                                                                 \\
			\leq & \frac{1}{2\pi }\left\|\, \intl_{\Gamma_I}  e^{z(t - s)}\left( (z^\alpha I + A)^{-1}
			-
			\suml_{k=0}^{m}\frac{(-A)^k }{z^{\alpha (k + 1)}} \right) f(s) \, dz \right\|                                                                                           \\
			     & +  \left\| \frac{1}{2\pi i} \intl_{\Gamma_I}  e^{z(t-s)} \suml_{k=0}^{m}\frac{(-A)^k f(s)}{z^{\alpha (k + 1)}} \, dz \right\|, \quad m = \lfloor \delta \rfloor.
		\end{aligned}
	\end{equation}
	The integrands from the last term are analytic inside the region encircled by $\Gamma_I$ and remain bounded,  because the norms
	\begin{equation}\label{eq:A_power_subord}
		\|A^kx\| =  \|A^{k-m} A^m x\|
		\leq \frac{M}{2\pi}  \intl_{\Gamma_I} \frac{|z|^{k-m}}{1+|z|} \|A^m x\| \, d z
	\end{equation}
	are bounded for $k < m$.
	Consequently, the second term is equal to zero by the Cauchy integral theorem.
	The first norm from the above bound for $\|S_{\alpha,\alpha}(t-s) f(s) \|$ is estimated via \cref{prop:FCP_res_cor}, yielding
	\begin{equation}\label{eq:FCP_Saa_est}
		\|S_{\alpha,\alpha}(t-s) f(s) \|
		\leq  C\intl_{\Gamma_I} \frac{\left \|A^{\delta} f(s) \right \|}{|z|^{\alpha (m+1)}(1+|z|^\alpha)^{\delta - m}} \, dz, \quad s \in [0, t], \ t \in [0, T].
	\end{equation}
	The strong convergence of the integral in \eqref{eq:FCP_Saa_est} is assured by the condition $\alpha(m+1) + \alpha(\delta - m) > 1$,  that gives us the target inequality for $\delta$.

	Next, we consider the case when the gap between $\mathrm{Sp}(A)$ and $0$ is arbitrarily small.
	Then, similarly to $S_{\alpha}(t)$, we are forced to use the contour $\Gamma_I$ that encircles $-\Sigma(\rho_s, \varphi_s) \cup \{0\}$.
	In such case, the scalar integrals from the second term in \eqref{eq:FCP_Saa_ge} can be regarded as the inverse Laplace transform of $z^{-\alpha (k + 1)}$, so
	\[
		\frac{1}{2\pi i} \intl_{\Gamma_I}  e^{z(t - s)} \suml_{k=0}^{m}\frac{(-A)^k f(s)}{z^{\alpha (k + 1)}} \, dz
		= \suml_{k=0}^{m} \frac{(t - s) ^{\alpha (k + 1)-1}}{\Gamma{({\alpha (k + 1)})}} A^k f(s).
	\]
	By inspecting the leading term of the last sum, we deduce that it is bounded for any $t \in [0, T]$ and $s \in [0, t]$, if and only if $f(s) \equiv 0$.
	This completes the proof.
\end{proof}

This lemma highlights two key deficiencies of solution representation \eqref{eq:FCP_S_sol_repr} or, equivalently,  \eqref{eq:FCP_Sol}.
Firstly, the stable and accurate numerical approximation of the inhomogeneous part from \eqref{eq:FCP_S_sol_repr} is achievable only under additional space-regularity assumption on the right-hand side $f(t)$ of \eqref{eq:FCP_DE}.
For $\alpha < 1$, it is generally more restrictive than the corresponding assumption $u_0 \in D(A^\gamma)$,  $\gamma>0$, needed for the homogeneous part.
If $\alpha < 1/2$, the required space regularity  might simply become incompatible with certain combinations of $f(t)$ and $A$, since the domain of $A^{ \frac{1-\alpha}{\alpha}}$ could be no longer dense in $X$.
Besides, the evaluation of arguments $A^k f(0)$, $k > 1$ from \eqref{eq:ResCor}, needed to enforce the convergence of $S_{\alpha,\alpha}(t-s)f(s)$ with $\alpha < 1/2$, is a challenging numerical problem in itself.
It can not be solved accurately without special precautions when $A$ is a finite-difference or finite-element discretization of some elliptic partial differential operator.
Recall that in the integer order case of problem \eqref{eq:FCP_DE}, \eqref{eq:FCP_BC}, the space regularity assumptions for homogeneous and inhomogeneous parts of the solution are equivalent and quite mild:   $u_0, f(t) \in D(A^\gamma)$,  $\gamma>0$, since the same propagator $\exp{\left( -A t \right)} = S_1(t)$ is used for both parts.

Secondly, the lemma's requirement about the spectrum separation renders the evaluation of $S_{\alpha,\alpha}(t)$  practically unfeasible for an important family of problems with singularly perturbed $A$ \cite{Kadalbajoo2010}.
Furthermore, the existing accuracy theory \cite{McLean2010,bGavrilyuk2011,Vasylyk2022} suggests that a slow convergence of the quadrature approximation of $S_{\alpha,\alpha}(t)$ is to be expected when the spectral parameter $\rho_s$ is positive but small (e.g. \cite{Harrell2004,Roos2008}).

The convergence versus regularity behavior of the formulas that involve $S'_{\alpha,\beta}(t)$ can be analyzed in a similar fashion, albeit this time the use of \cref{prop:FCP_res_cor} should be prepended by the transformation $S'_{\alpha,\beta}(t) = S_{\alpha,\beta-1}(t)$, stemming from \eqref{eq:FCP_SO_cont_repr}.
By such means we get the condition $f(t) \in D(A^\delta)$,  $\delta>1/\alpha$ for the solution representations from Lemma 1.1 of \cite{Hernandez2010}, which is even more restrictive than the corresponding condition $\delta > \frac{1-\alpha}{\alpha}$ for \eqref{eq:FCP_S_sol_repr} or \eqref{eq:FCP_Sol}.

\subsection{Representation of solution via subordination principle}
In the sub-parabolic case, the space regularity assumptions imposed by \cref{thm:FCP_Saa_sc} can be relaxed by introducing yet another notion of the fractional propagator \cite{Pruess2013, Bazhlekova2000,Bazhlekova2001}:
\begin{equation}\label{eq:FCP_SO_subord_repr}
	P_\alpha(t) = \intl_0^\infty \Phi_\alpha(s) \exp{\left( -A st^{\alpha} \right)}\, d s, \quad \Phi_\alpha(z) = \suml_{n=0}^\infty \frac{(-z)^n}{n! \Gamma(1-\alpha (n + 1) )}.
\end{equation}
The so-called \textit{M}-Wright function $\Phi_\alpha(z)$ from \eqref{eq:FCP_SO_subord_repr} obeys the law \cite{Mainardi2010}
\begin{equation}\label{eq:MWright_prop1}
	\intl_0^\infty z^n \Phi_\alpha(z) \, dz = \frac{\Gamma(n+1)}{\Gamma(\alpha n + 1)}, \quad n > 1, \ \alpha \in [0, 1).
\end{equation}
Then, the mild solution of problem \eqref{eq:FCP_DE}, \eqref{eq:FCP_BC} with $\alpha \in (0, 1)$ can be represented as follows \cite{ElBorai2002,Keyantuo2013}
\begin{equation}\label{eq:FCP_P_sol_repr}
	\begin{aligned}
		u(t)  = & P_\alpha(t) u_{0}
		+ \intl_0^t (t - s)^{\alpha - 1} P_{\alpha,\alpha}(t - s) f(s)\, d s,
	\end{aligned}
\end{equation}
where the operator function
$
	P_{\alpha,\alpha}(t) = \alpha \intl_0^\infty s \Phi_\alpha(s) \exp{\left( -A s t^{\alpha}\right)}\, d s
$
is well-defined for all $t \in [0,T]$, owing to  \eqref{eq:MWright_prop1}.
The strong convergence of \eqref{eq:FCP_P_sol_repr} is guaranteed under the minimal space regularity assumptions on both $u_0$ and $f(t)$: $u_0 \in D(A^\gamma)$, $f(t)\in D(A^\kappa)$,  $\gamma,\kappa >0$.
From the application point of view however, the representation of solution to \eqref{eq:FCP_DE}, \eqref{eq:FCP_BC} by means of \eqref{eq:FCP_P_sol_repr} introduces an additional level of complexity associated with the numerical evaluation of the time integrals in $P_\alpha(t)$ and $P_{\alpha,\alpha}(t)$.
For that reason, the numerical potential of \eqref{eq:FCP_SO_subord_repr} has been largely left unexplored, with exception of \cite{Aceto2022}.
The direct approximation of \eqref{eq:FCP_P_sol_repr} might become a worthwhile complement to the numerical methods from \cite{Vasylyk2022,Colbrook2022a,Sytnyk2023}, provided that the underlying integrals can be numerically evaluated in the accurate and efficient manner for small values of $\alpha \in (0, 0.1)$, or when $\alpha \varphi_s > \pi/2$.

The evidence supplied in \cite{Sytnyk2023} shows that all mentioned complications with \eqref{eq:FCP_S_sol_repr} can be avoided, if the solution formula involves only the propagators $S_{\alpha}(t)$, $S_{\alpha,2}(t)$.
The next section is devoted to deriving the alternative representation of the solution to fractional Cauchy problem \eqref{eq:FCP_DE}, \eqref{eq:FCP_BC} that fulfills such property.

\section{New form of mild solution representation}\label{sec:FCP_Sol_Ex_Repr}
According to Proposition 1.2  from \cite{Pruess2013}, the mild solution $u(t)$ of \eqref{eq:FCP_DE}, \eqref{eq:FCP_BC}  can be formally expressed through the variation of parameters formula
\begin{equation}\label{eq:FCP_InhomSol1}
	u(t) = \frac{d}{d t} \intl_0^t S_\alpha(t - s) v(s)\, d s,
\end{equation}
where the function $v(t) =  \suml_{k=0}^{n-1}u_k t^k  + J_\alpha f(t)$ is determined from \eqref{eq:FCP_Mild}.
The next theorem presents a more convenient version of \eqref{eq:FCP_InhomSol1} and formalizes the conditions for its existence.

\begin{theorem}\label{thm:FCP_sol_rep}
	Let $\alpha \in (0,2)$, $A$ be a strongly positive operator with the domain $D(A)$, and
	the spectral parameters $\rho_s$, $\varphi_s$.
	If  $f \in W^{1,1}([0, T], X)$, $u_0, u_1 \in D(A)$, then there exists a unique mild solution $u(t)$ of problem \eqref{eq:FCP_DE}, \eqref{eq:FCP_BC}  that can be represented as follows
	\begin{equation}\label{eq:FCP_InhomSol_rep}
		u(t) = S_\alpha(t) u_0 + S_{\alpha,2}(t) u_1 + J_\alpha S_{\alpha}(t)f(0) + \intl_0^t S_{\alpha}(t - s)   J_\alpha f'(s) d s.
	\end{equation}
	Here, the initial value $u_1 \equiv 0$ for $\alpha \in (0, 1]$  and $S_{\alpha,\beta}(t)$ is defined by \eqref{eq:FCP_SO_cont_repr}.
\end{theorem}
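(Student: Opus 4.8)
The plan is to begin from the variation-of-parameters formula \eqref{eq:FCP_InhomSol1} and reshape it into \eqref{eq:FCP_InhomSol_rep}. Existence and uniqueness of the mild solution are secured by the framework of \cite{Pruess2013}: since $A$ is strongly positive, \cref{lem:FCP_S_repr} provides the propagator $S_\alpha(t)$, whence Proposition 1.1 of \cite{Pruess2013} guarantees that the Volterra equation \eqref{eq:FCP_Mild} is well-posed and possesses a unique mild solution, while Proposition 1.2 of \cite{Pruess2013} represents it by $u(t) = \frac{d}{dt}(S_\alpha * v)(t)$ with $v(t) = \suml_{k=0}^{n-1} u_k t^k + J_\alpha f(t)$. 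The task then reduces to evaluating this derivative in closed form.

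First I would transfer the time derivative onto $v$ rather than onto the merely strongly continuous $S_\alpha$. Because $f \in W^{1,1}([0,T],X)$, the function $v$ is absolutely continuous with $v' \in L^1([0,T],X)$ and $v(0) = u_0$; writing the convolution as $\intl_0^t S_\alpha(\tau) v(t-\tau)\,d\tau$ and differentiating yields
\begin{equation*}
	u(t) = S_\alpha(t)u_0 + \intl_0^t S_\alpha(t-s)\,v'(s)\,ds.
\end{equation*}
I would then compute $v'(s) = \suml_{k=1}^{n-1} k u_k s^{k-1} + \frac{d}{ds} J_\alpha f(s)$ and simplify the last term via the analogous convolution identity $\frac{d}{ds}J_\alpha f(s) = g_\alpha(s) f(0) + J_\alpha f'(s)$, where $g_\alpha(s) = s^{\alpha-1}/\Gamma(\alpha)$. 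This again moves the derivative off the singular kernel $g_\alpha$ and onto $f'$, using $(J_\alpha f)(0)=0$.

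Substituting $v'$ and splitting the integral, I would identify the resulting pieces term by term. The polynomial contribution appears only for $n=2$ and equals $\intl_0^t S_\alpha(t-s)u_1\,ds = \left(\intl_0^t S_\alpha(\tau)\,d\tau\right) u_1 = S_{\alpha,2}(t)u_1$, where the last step uses $S'_{\alpha,2}(t) = S_\alpha(t)$ from \eqref{eq:FCP_SO_cont_repr} together with $S_{\alpha,2}(0)=0$ from \cref{thm:FCP_S_prop_b2}; for $n=1$ it is absent, matching the convention $u_1 \equiv 0$. The $f(0)$-term becomes $\intl_0^t S_\alpha(t-s)g_\alpha(s)\,ds\, f(0) = J_\alpha S_\alpha(t) f(0)$ by commutativity of the convolution, and the residual piece is precisely $\intl_0^t S_\alpha(t-s) J_\alpha f'(s)\,ds$. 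Assembling the four contributions reproduces \eqref{eq:FCP_InhomSol_rep}. I would close by verifying that each summand is well-defined and strongly continuous on $[0,T]$: the homogeneous terms through \cref{lem:FCP_S_repr} and \cref{thm:FCP_S_prop_b2}, and the two convolution terms through the uniform boundedness and strong continuity of $S_\alpha$ established in \cref{lem:FCP_S_repr}, together with $f(0) \in X$ and $J_\alpha f' \in L^1([0,T],X)$.

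The main difficulty I anticipate is the rigorous justification of the two convolution differentiations at the endpoint $t=0$, where $g_\alpha$ is singular for $\alpha<1$ and $S_\alpha$ is not differentiable. The resolution hinges on the hypothesis $f \in W^{1,1}([0,T],X)$: representing $v$ and $J_\alpha f$ through their absolutely continuous primitives permits differentiation under the Bochner integral with the derivative falling on $f'$ and on a boundary term paired with the integrable objects $S_\alpha(t)\to I$ and $g_\alpha \in L^1$. This is exactly why mere continuity of $f$ is insufficient, and why the new representation is free of the detrimental factors $(t-s)^{\alpha-1}$ and $S_{\alpha,\alpha}(t-s)$ of \eqref{eq:FCP_S_sol_repr}. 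A secondary point requiring attention is confirming consistency with \cref{def:FCP_Mild_Sol}, namely that the assembled $u$ satisfies \eqref{eq:FCP_Mild} and lies in $C([0,T],D(A))$; since it was produced from the genuine mild solution \eqref{eq:FCP_InhomSol1}, this amounts to tracking the $D(A)$-regularity contributed by $u_0,u_1 \in D(A)$ and the smoothing afforded by the convolution under $f \in W^{1,1}$.
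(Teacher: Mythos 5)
Your proposal is correct and follows essentially the same route as the paper: both invoke Pr\"uss's variation-of-parameters representation $u(t)=S_\alpha(t)v(0)+(S_\alpha * v')(t)$, move the derivative onto $f'$ to obtain $v'(s)=u_1+g_\alpha(s)f(0)+J_\alpha f'(s)$, and identify the resulting convolutions with $S_{\alpha,2}(t)u_1$ (via \cref{thm:FCP_S_prop_b2}), $J_\alpha S_\alpha(t)f(0)$, and $\intl_0^t S_\alpha(t-s)J_\alpha f'(s)\,ds$. The only cosmetic difference is that you treat both ranges of $\alpha$ uniformly through Proposition 1.2 of \cite{Pruess2013}, whereas the paper splits into $\alpha>1$ and $\alpha\le 1$ and cites formula (1.11) of \cite{Pruess2013} plus associativity of convolution for the latter case.
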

\begin{proof}
	The assumptions imposed on $A$ in the formulation of the theorem together with the results of Lemma \ref{lem:FCP_S_repr} guarantee the existence and well-definiteness of the propagator $S_\alpha(t)$.
	Equation \eqref{eq:FCP_Mild} can be regarded as an abstract integral equation
	\begin{equation}\label{eq:FCP_Mild_abstr}
		u(t) = v(t) -  \left(g_\alpha * A u \right)(t),
	\end{equation}
	with $g_\alpha(t) =  {t^{\alpha-1}}/{\Gamma(\alpha)}$, $v(t) = u_0+ t u_1  + J_\alpha f(t)$.
	Here, the symbol $*$ is again used to denote the convolution $(f * g) (t)= \intl_0^t f(t-s) g(s) \, ds$.

	Let $\alpha > 1$, then the function $v(t)$ is differentiable and
	\begin{align*}
		v'(t)
		 & = u_1 + \frac{\alpha - 1}{\Gamma(\alpha)}\intl_0^t (t-s)^{\alpha-2} f(s)\,ds
		\\ &= u_1 +  g_\alpha(t-s)f(s)\Big|_0^t  +
		\frac{1}{\Gamma(\alpha)}\intl_0^t (t-s)^{\alpha-1} f'(s)\,ds                    \\
		 & = u_1 + g_\alpha(t) f(0) + J_\alpha f'(t).
	\end{align*}
	This formula, together with the theorem's assumptions regarding the regularity of $f(t)$,  ensures that $v \in W^{1,1}([0, T],X)$.
	Hence, we are allowed to apply Proposition 1.2  from \cite{Pruess2013}, which states that the function
	\begin{equation}\label{eq:Pruss_prop_1_2_2}
		u(t) = S_\alpha(t) v(0) + \left( S_{\alpha} * v'\right)(t)
	\end{equation}
	is the unique solution to the Voltera integral equation $u(t) = f(t) - A J_\alpha u (t)$.
	We substitute $v(0)$, $v'(t)$ into \eqref{eq:Pruss_prop_1_2_2} and get
	\begin{equation}\label{eq:FCP_InhomSol_rep1}
		u(t) = S_\alpha(t) u_0 +\left( S_{\alpha} * u_1 \right)(t) + J_\alpha S_{\alpha}(t)f(0) + \left(S_{\alpha} * J_\alpha f'\right)(t).
	\end{equation}
	The terms involving $f(0)$ and $f'(t)$ belong to $D(A)$ for any fixed $t \in [0,T]$ by Proposition 1.1 from \cite{Pruess2013}.
	Property \ref{def:FCP_Sol_Op_Comm} from \cref{def:FCP_Sol_Op} declares that the propagator $S_\alpha(t)$ commutes with $A$, since $u_0, u_1 \in D(A)$ by the theorem's premise.
	Thus, the function $u(t)$ defined by \eqref{eq:FCP_InhomSol_rep1} is the unique solution of \eqref{eq:FCP_Mild_abstr} and of \eqref{eq:FCP_Mild}.
	After showing that, we further simplify the operator acting on the second initial condition $u_1$:
	\begin{align*}
		(S_{\alpha} * u_1) (t) &
		= \frac{1}{2 \pi i } \intl_{\Gamma_I} \intl_0^t  e^{z(t-s)} z^{\alpha-1}(z^\alpha I + A)^{-1} u_1 \, ds \, dz                                                             \\
		                       & = \frac{1}{2\pi i} \intl_{\Gamma_I} \left . \left (-\frac{1}{z} e^{z(t-s)}\right  )\right |_{0}^{t} z^{\alpha-1} (z^\alpha I + A)^{-1} u_1 \, dz \\
		                       & = \frac{1}{2 \pi i } \intl_{\Gamma_I} (e^{zt} - 1)z^{\alpha-2}(z^\alpha I + A)^{-1} u_1 \, dz                                                    \\
		                       & = S_{\alpha,2}(t) u_1  - S_{\alpha,2}(0) u_1 .
	\end{align*}
	The term $S_{\alpha,2}(0)u_1$ is equal to zero by \cref{thm:FCP_S_prop_b2}.
	Therefore, the combination of the last formula and \eqref{eq:FCP_InhomSol_rep1} gives us \eqref{eq:FCP_InhomSol_rep}.

	Now, we assume $\alpha \leq 1$.
	In this case, the operator-independent part $v(t)$ of equation \eqref{eq:FCP_Mild_abstr} takes the form
	$v(t) = u_0  + J_\alpha f(t)$.
	For such $\alpha$ and $v(t)$, the unique solution of this equation is provided by formula (1.11) from \cite{Pruess2013}:
	\[
		u(t) =  S_\alpha(t) u_0 + J_\alpha S_{\alpha}(t)f(0) + \left( g_\alpha * S_{\alpha} * f' \right) (t).
	\]
	As a final step needed to transform this representation into \eqref{eq:FCP_InhomSol_rep}, we use the associativity of convolution:
	\[\left( g_\alpha * S_{\alpha} * f' \right) (t) = \intl_0^t S_{\alpha}(t - s)  J_\alpha f'(s) \, ds,\]
	that holds true in the consequence of Fubini's theorem \cite[Thm. 8.7]{Lang2012}.
	The constructed solution representation involves only the linear bounded and continuous in $t$ operators, hence  $u \in C([0, T],D(A))$ for any given combination of $u_0$, $u_1$ and $f(t)$.
\end{proof}

As we can see from \cref{thm:FCP_sol_rep}, the inhomogeneous part of the newly derived solution representation relies only on the original problem's propagator $S_{\alpha}(t)$.
By \cref{prop:FCP_res_cor}, this implies that the following space regularity assumptions
are needed for the strong convergence of integrals involved in solution representation \eqref{eq:FCP_InhomSol_rep} with $t \geq 0$:
\begin{equation}\label{eq:FCP_sol_sc}
	\exists \gamma,\kappa > \min{\{0,1-\alpha\}}: \ u_0 \in D(A^\gamma),\   f(0), f'(s) \in D(A^\kappa), \quad  s \in (0, T].
\end{equation}
For the affected range $\alpha \in (0, 1]$, formula \eqref{eq:FCP_sol_sc} is more permissive with respect to the space regularity of the function $f$, than the constraints imposed by \cref{thm:FCP_Saa_sc}. 
In particular, inequality \eqref{eq:A_power_subord} reveals that the domain of $A^\kappa$ with any fixed $\kappa \in (0,1]$ is dense in $X$.
Hence, the strong convergence of the operator function definitions in \eqref{eq:FCP_InhomSol_rep} for $\alpha < 1/2$ is practically realizable for any positive sectorial operator $A$.
This is not always the case for solution formula \eqref{eq:FCP_S_sol_repr}, as discussed in \cref{sec:FCP_Propagators}.

When $\alpha \to 1$, formula \eqref{eq:FCP_InhomSol_rep} recovers the mild solution to the integer order Cauchy problem, which requires precisely the same conditions for the strong convergence as in \eqref{eq:FCP_sol_sc}.
On that account,  the regularity imposed by \eqref{eq:FCP_sol_sc} is as weak as possible;
whereas, formula \eqref{eq:FCP_InhomSol_rep} can be regarded as a more natural fractional extension of the integer order solution than the previously known representation \eqref{eq:FCP_S_sol_repr} or \eqref{eq:FCP_Sol}.
Moreover, if the conditions of \cref{thm:FCP_sol_rep} are met and $u_1 \equiv 0$, then the constructed solution is continuous with respect to $\alpha$ for the whole range  $\alpha \in \left(0, 2\right) $.
The mentioned properties of \eqref{eq:FCP_InhomSol_rep} should prove to be useful for certain parameter identification problems \cite{Li2013a,Zhokh2019} as well as for the final value problems \cite{Jin2015,Kaltenbacher2019}  associated with \eqref{eq:FCP_DE}, \eqref{eq:FCP_BC}.

It is noteworthy to highlight that conditions \eqref{eq:FCP_sol_sc} fit well into an existing solution theory of fractional Cauchy problems.
When $A = - \Delta$ for instance, the condition $u_0 \in D(A^\gamma)$ is equivalent to the assumption $u_0 \in \dot{H}^{2\gamma}(\Omega)$, $\gamma \in (0,1]$, encompassing most of the realistic application scenarios.
Then, the solution $u(t)$ of the homogeneous version of \eqref{eq:FCP_DE}, \eqref{eq:FCP_BC} with such $A$, $u_0$ and $\alpha < 1$ satisfies the energy estimate \cite[Theorem 2.1]{Sakamoto2011}
\[
	\left\| \partial^{m}_t u(t)\right\|_{\dot{H}^{p}(\Omega)}
	\leq c t^{\alpha\frac{2\gamma-p}{2} - m} \left\| u_0 \right\| _{\dot{H}^{2\gamma}(\Omega)},
	\quad 0 \leq p - 2\gamma \leq 2,
\]
indicating that the propagator $S_\alpha(t)$ exhibits a characteristic weak singularity at $t = 0$.
For any other fixed $t>0$, it has only a limited in-space smoothing effect, unlike the integer order propagator $\exp{(-At)}$.
More in-depth review of results on the connections between regularity of  the initial data and the solution to \eqref{eq:FCP_DE}, \eqref{eq:FCP_BC} with such $A$ is given in \cite{jin2019numerical}.
For the abstract solution regularity estimates and their discretized analogues, we refer the reader to \cite{Bazhlekova2001,Bazhlekova2012,Lizama2015,Jin2018b}.

Unsurprisingly, all mentioned benefits of the new solution representation come at a price.
Formula \eqref{eq:FCP_InhomSol_rep} requires more operations to evaluate the solution, when compared to  \eqref{eq:FCP_S_sol_repr}, and it is dependent upon $f'(t)$.
In \cite{Sytnyk2023}, we argue that such trade-off is still worth to consider from the numerical standpoint, because the action of the integer order derivative is local and the values of $J_\alpha f'(t)$, $t \in [0,T]$ can be pre-computed efficiently, even if $f'(t)$ has a singularity at one or both interval endpoints.
Other techniques to evaluate  the Riemann-Liouville integral operator $J_\alpha$ with appealing numerical properties are available from \cite{Baffet2017,Khristenko2021}.
The computational efficiency of \eqref{eq:FCP_InhomSol_rep} is also affected by the order in which the operators $J_\alpha$, $S_{\alpha}(t)$ act on $f(0)$ and $f'(t)$.
The calculated values of $(z^\alpha I+A)^{-1}f(0)$  from \eqref{eq:FCP_SO_cont_repr} can be reused during the quadrature evaluation of $S_\alpha(t) f(0)$ for the different values of $t$.
This explains the particular arrangement of terms in \eqref{eq:FCP_InhomSol_rep}.
More detailed discussion on that matter is provided in the numerical part of the current study \cite{Sytnyk2023}.

If $\alpha \in (1,2)$, we can get rid of $f'(t)$ in \eqref{eq:FCP_InhomSol_rep} using the formula $J_\alpha f'(t) = J_{\alpha-1}f(t) - J_\alpha f(0)$, and exchange $J_{\alpha-1}$ with $S_{\alpha}(t)$ by the associativity of convolution.
In this way, we arrive at the solution representation from \cite{Li2015,Li2016}:
\begin{equation}\label{eq:FCP_InhomSol_rep_Li}
	u(t) = S_\alpha(t) u_0 + \intl_0^t S_{\alpha}(s) u_1\, d s +  J_{\alpha - 1}\left(S_{\alpha} * f\right)\, (t), \quad f \in L^1([0, T],X).
\end{equation}
The results from \cite{Hernandez2010} use a similar derivation procedure as the one employed by us in \cref{thm:FCP_sol_rep}, thus the correspondence between the respective solution formulas is obvious.
The equivalence between \eqref{eq:FCP_InhomSol_rep} and \eqref{eq:FCP_S_sol_repr} is established via \eqref{eq:FCP_InhomSol_rep_Li} and the identity $J_{\alpha-1} S_{\alpha,1}(t) = S_{\alpha,\alpha}(t)$, proved in \cite{Li2015}.

\section*{Conclusions}
This work is devoted to the aspects of solution theory for abstract fractional Cauchy problem \eqref{eq:FCP_DE}, \eqref{eq:FCP_BC} that shed new light on the connections between the practical versatility of the mild solution representation, strong convergence of the involved integral operators and the space-regularity of the problem's initial data $u_0, u_1, f(t)$.
Uniform strong convergence of the integrals from previously known representation \eqref{eq:FCP_Sol} is guaranteed under assumptions $u_0 \in D(A^\gamma)$, $f(t)  \in D(A^\kappa)$ with some  $\gamma >0$ and  $\kappa > {(1-\alpha)}/{\alpha}$.
For sub-parabolic problems, the fulfillment of such assumptions leads to the disproportionately limiting constrains on the space regularity of $f(t)$, that are becoming severe, if $\alpha < 1/2$.
This fact makes formula \eqref{eq:FCP_Sol} unsuitable for the quadrature-based numerical evaluation of the mild solution.
The issue with requiring additional regularity of $f(t)$ is not related to the nature of \eqref{eq:FCP_DE}, \eqref{eq:FCP_BC}, because the homogeneous part of solution, constructed from the native problem's propagator $S_\alpha(t)$ and its integral $\intl_0^t S_\alpha(t-s) u_1 \, ds = S_{\alpha,2}(t) u_1$, remains unaffected.
It is rather related to the particular structure of representation \eqref{eq:FCP_Sol} and its use of the propagator $S_{\alpha,\alpha}(t)$, originally associated with the different fractional Cauchy problem.

To get around this issue, we derived alternative representation \eqref{eq:FCP_InhomSol_rep} of the mild solution to problem \eqref{eq:FCP_DE}, \eqref{eq:FCP_BC},  which  involves only the propagator $S_\alpha(t)$ and remains valid for the same range of fractional order parameter $\alpha \in (0,2)$ as \eqref{eq:FCP_Sol}.
Strong convergence of the integrals in \eqref{eq:FCP_InhomSol_rep} for $t \in [0, T]$ is guaranteed under much more permissive space regularity assumptions \eqref{eq:FCP_sol_sc}, that are actually equivalent to the corresponding assumptions for the classical parabolic Cauchy problem \cite{bGavrilyuk2011}.
In that respect, the space-regularity induced by \eqref{eq:FCP_sol_sc} can be regarded as minimal possible.
Moreover, when $A = - \Delta$, conditions \eqref{eq:FCP_sol_sc} are fully compatible with the existing energy estimates \cite{jin2019numerical}, describing regularity of the solution in the scale of spaces $\dot{H}^{s}(\Omega)$.
This observation gives an additional incentive to use the numerical method from the companion work \cite{Sytnyk2023} for applied problems with a limited spatial regularity.

Even thought the presented results were mostly driven by the numerical applications, the proposed solution representation could also help to get some new theoretical insights.
More refined estimates for inhomogeneous version of the given problem, that are based on the asymptotics of $S_{\alpha}(t)$, is one of them.
An application of the new solution formula to nonlinear extensions of the considered fractional Cauchy problem is another promising direction of research.
Recall that according to general theory the solution of  \eqref{eq:FCP_DE}, \eqref{eq:FCP_BC} with $f(t) = f(t,u(t))$ should satisfy equation \eqref{eq:FCP_InhomSol_rep}.
In this regard, the results of \cref{thm:FCP_sol_rep}, establishing the validity of \eqref{eq:FCP_InhomSol_rep} with no additional space regularity assumptions on $f$,  should be useful.
Such semi-linear extensions of the given problem will be considered in future works.

\iflatexml
{
	\footnotesize

}
\else
{
	\footnotesize
	\bibliographystyle{siamplain}
	\bibliography{references}

\begin{thebibliography}{10}

\bibitem{Aceto2022}
{\sc L.~Aceto and F.~Durastante}, {\em Efficient computation of the wright
  function and its applications to fractional diffusion-wave equations},
  {ESAIM}: Mathematical Modelling and Numerical Analysis, 56 (2022),
  pp.~2181--2196, \url{https://doi.org/10.1051/m2an/2022069}.

\bibitem{Araya2008}
{\sc D.~Araya and C.~Lizama}, {\em Almost automorphic mild solutions to
  fractional differential equations}, Nonlinear Analysis: Theory, Methods \&
  Applications, 69 (2008), pp.~3692--3705,
  \url{https://doi.org/10.1016/j.na.2007.10.004}.

\bibitem{stripBade1953}
{\sc W.~G. Bade}, {\em An operational calculus for operators with spectrum in a
  strip.}, Pacific J. Math., 3 (1953), pp.~257--290,
  \url{https://doi.org/10.2140/pjm.1953.3.257},
  \url{http://projecteuclid.org/euclid.pjm/1103051392}.

\bibitem{baffet2017high}
{\sc D.~Baffet and J.~S. Hesthaven}, {\em High-order accurate adaptive kernel
  compression time-stepping schemes for fractional differential equations},
  Journal of Scientific Computing, 72 (2017), pp.~1169--1195,
  \url{https://doi.org/10.1007/s10915-017-0393-z}.

\bibitem{Baffet2017}
{\sc D.~Baffet and J.~S. Hesthaven}, {\em A kernel compression scheme for
  fractional differential equations}, SIAM Journal on Numerical Analysis, 55
  (2017), pp.~496--520, \url{https://doi.org/10.1137/15M1043960},
  \url{https://arxiv.org/abs/https://doi.org/10.1137/15M1043960}.

\bibitem{Balachandran2016}
{\sc K.~Balachandran, M.~Matar, and J.~Trujillo}, {\em Note on controllability
  of linear fractional dynamical systems}, Journal of Control and Decision, 3
  (2016), pp.~267--279, \url{https://doi.org/10.1080/23307706.2016.1217754}.

\bibitem{batty2012bounded}
{\sc C.~Batty, J.~Mubeen, and I.~V{\"o}r{\"o}s}, {\em {Bounded
  $H\infty$-calculus for strip-type operators}}, Integral Equations and
  Operator Theory, 72 (2012), pp.~159--178,
  \url{https://doi.org/10.1007/s00020-011-1922-z}.

\bibitem{Bazhlekova1998}
{\sc E.~Bazhlekova}, {\em The abstract {Cauchy} problem for the fractional
  evolution equation}, Fractional Calculus and Applied Analysis, 1 (1998),
  pp.~255--270.

\bibitem{Bazhlekova2001}
{\sc E.~Bazhlekova}, {\em Fractional evolution equations in {Banach} spaces},
  PhD thesis, Department of Mathematics and Computer Science, 2001,
  \url{https://doi.org/10.6100/IR549476}.

\bibitem{Bazhlekova2012}
{\sc E.~Bazhlekova}, {\em Strict {$L_p$} solutions for nonautonomous fractional
  evolution equations}, Mathematica Balkanica,  (2012).

\bibitem{Bazhlekova2000}
{\sc E.~G. Bazhlekova}, {\em Subordination principle for fractional evolution
  equations}, Fractional Calculus and Applied Analysis, 3 (2000), pp.~213--230.

\bibitem{Bilyj2010}
{\sc O.~Bilyj, E.~Schrohe, and J.~Seiler}, {\em {$H^\infty$}-calculus for
  hypoelliptic pseudodifferential operators}, in Proc. Amer. Math. Soc,
  vol.~138, American Mathematical Society (AMS), Jan. 2010, pp.~1645--1656,
  \url{https://doi.org/10.1090/S0002-9939-10-10271-8}.

\bibitem{Colbrook2022a}
{\sc M.~J. Colbrook and L.~J. Ayton}, {\em A contour method for time-fractional
  pdes and an application to fractional viscoelastic beam equations}, Journal
  of Computational Physics, 454 (2022), p.~110995,
  \url{https://doi.org/10.1016/j.jcp.2022.110995}.

\bibitem{Mendes2013}
{\sc P.~C.~N. De~Mendes}, {\em Fractional differential equations: a novel study
  of local and global solutions in {Banach} spaces}, PhD thesis, Universidade
  de S{\~a}o Paulo, 2013,
  \url{https://doi.org/10.11606/T.55.2013.tde-06062013-145531}.

\bibitem{DiNezza2012}
{\sc E.~Di~Nezza, G.~Palatucci, and E.~Valdinoci}, {\em Hitchhiker's guide to
  the fractional sobolev spaces}, Bulletin des sciences math{\'e}matiques, 136
  (2012), pp.~521--573, \url{https://doi.org/10.1016/j.bulsci.2011.12.004}.

\bibitem{Diethelm2020}
{\sc K.~Diethelm, R.~Garrappa, and M.~Stynes}, {\em Good (and not so good)
  practices in computational methods for fractional calculus}, Mathematics, 8
  (2020), p.~324, \url{https://doi.org/10.3390/math8030324}.

\bibitem{Dunford1988}
{\sc N.~Dunford and J.~T. Schwartz}, {\em Linear operators. Pt. 1, general
  theory.}, John Wiley \& Sons, New York, 1988.

\bibitem{Dzherbashian2020}
{\sc M.~Dzherbashian and A.~Nersesian}, {\em Fractional derivatives and
  {Cauchy} problem for differential equations of fractional order}, Fractional
  Calculus and Applied Analysis, 23 (2020), pp.~1810--1836,
  \url{https://doi.org/10.1515/fca-2020-0090}.

\bibitem{ElBorai2002}
{\sc M.~M. El-Borai}, {\em Some probability densities and fundamental solutions
  of fractional evolution equations}, Chaos, Solitons \& Fractals, 14 (2002),
  pp.~433--440, \url{https://doi.org/10.1016/s0960-0779(01)00208-9}.

\bibitem{fattorini}
{\sc H.~Fattorini}, {\em Second Order Linear Differential Equations in {Banach}
  Spaces}, North-Holland, Amsterdam, 1985.

\bibitem{b_cp_Fattorini1984}
{\sc H.~O. Fattorini and A.~Kerber}, {\em The {Cauchy} Problem}, Cambridge
  University Press, 1984, \url{https://doi.org/10.1017/CBO9780511662799}.

\bibitem{fujita}
{\sc H.~Fujita, N.~Saito, and T.~Suzuki}, {\em Operator Theory and Numerical
  Methods}, Elsevier, Heidelberg, 2001.

\bibitem{Garrappa2015}
{\sc R.~Garrappa}, {\em Numerical evaluation of two and three parameter
  mittag-leffler functions}, SIAM Journal on Numerical Analysis, 53 (2015),
  pp.~1350--1369, \url{https://doi.org/10.1137/140971191}.

\bibitem{Garrappa2015a}
{\sc R.~Garrappa}, {\em Trapezoidal methods for fractional differential
  equations: Theoretical and computational aspects}, Mathematics and Computers
  in Simulation, 110 (2015), pp.~96--112,
  \url{https://doi.org/10.1016/j.matcom.2013.09.012}.

\bibitem{bGavrilyuk2011}
{\sc I.~Gavrilyuk, V.~Makarov, and V.~Vasylyk}, {\em Exponentially convergent
  algorithms for abstract differential equations}, Frontiers in Mathematics,
  Birkh\"auser/Springer Basel AG, Basel, 2011,
  \url{https://doi.org/10.1007/978-3-0348-0119-5}.

\bibitem{Gavrilyuk2004}
{\sc I.~Gavrilyuk, V.~L. Makarov, and V.~Vasylyk}, {\em A new estimate of the
  sing method for linear parabolic problems including the initial point},
  Computational Methods in Applied Mathematics, 4 (2004), pp.~163--179,
  \url{https://doi.org/10.2478/cmam-2004-0009}.

\bibitem{Gorenflo1998}
{\sc R.~Gorenflo and F.~Mainardi}, {\em Fractional calculus and stable
  probability distributions}, Archives of Mechanics, 50 (1998), pp.~377--388.

\bibitem{Guo2019}
{\sc L.~Guo, F.~Zeng, I.~Turner, K.~Burrage, and G.~E. Karniadakis}, {\em
  Efficient multistep methods for tempered fractional calculus: Algorithms and
  simulations}, {SIAM} Journal on Scientific Computing, 41 (2019),
  pp.~A2510--A2535, \url{https://doi.org/10.1137/18m1230153}.

\bibitem{Haase2006}
{\sc M.~Haase}, {\em Functional Calculus for Sectorial Operators}, Operator
  Theory: Advances and Applications, Birkh\"auser Basel, 1~ed., 2006,
  \url{https://doi.org/10.1007/3-7643-7698-8_2}.

\bibitem{Harrell2004}
{\sc E.~M. {Harrell}}, {\em Geometric lower bounds for the spectrum of elliptic
  {PDEs} with dirichlet conditions in part}, Journal of Computational and
  Applied Mathematics, 194 (2006), pp.~26--35,
  \url{https://doi.org/10.1016/j.cam.2005.06.012}.

\bibitem{Hernandez2010}
{\sc E.~Hern{\'a}ndez, D.~O'Regan, and K.~Balachandran}, {\em On recent
  developments in the theory of abstract differential equations with fractional
  derivatives}, Nonlinear Analysis: Theory, Methods \& Applications, 73 (2010),
  pp.~3462--3471, \url{https://doi.org/10.1016/j.na.2010.07.035}.

\bibitem{jin2019numerical}
{\sc B.~Jin, R.~Lazarov, and Z.~Zhou}, {\em Numerical methods for
  time-fractional evolution equations with nonsmooth data: a concise overview},
  Computer Methods in Applied Mechanics and Engineering, 346 (2019),
  pp.~332--358, \url{https://doi.org/10.1016/j.cma.2018.12.011}.

\bibitem{Jin2018b}
{\sc B.~Jin, B.~Li, and Z.~Zhou}, {\em Discrete maximal regularity of
  time-stepping schemes for fractional evolution equations}, Numerische
  mathematik, 138 (2018), pp.~101--131,
  \url{https://doi.org/10.1007/s00211-017-0904-8}.

\bibitem{Jin2015}
{\sc B.~Jin and W.~Rundell}, {\em A tutorial on inverse problems for anomalous
  diffusion processes}, Inverse problems, 31 (2015), p.~035003,
  \url{https://doi.org/10.1088/0266-5611/31/3/035003}.

\bibitem{Kadalbajoo2010}
{\sc M.~K. Kadalbajoo and V.~Gupta}, {\em A brief survey on numerical methods
  for solving singularly perturbed problems}, Applied mathematics and
  computation, 217 (2010), pp.~3641--3716,
  \url{https://doi.org/10.1016/j.amc.2010.09.059}.

\bibitem{Kaltenbacher2019}
{\sc B.~Kaltenbacher and W.~Rundell}, {\em On an inverse potential problem for
  a fractional reaction--diffusion equation}, Inverse Problems, 35 (2019),
  p.~065004, \url{https://doi.org/10.1088/1361-6420/ab109e}.

\bibitem{Keyantuo2013}
{\sc V.~Keyantuo, C.~Lizama, and M.~Warma}, {\em Spectral criteria for
  solvability of boundary value problems and positivity of solutions of
  time-fractional differential equations}, in Abstract and Applied Analysis,
  vol.~2013, Hindawi, Hindawi Limited, 2013, pp.~1--11,
  \url{https://doi.org/10.1155/2013/614328}.

\bibitem{Khristenko2021}
{\sc U.~Khristenko and B.~Wohlmuth}, {\em Solving time-fractional differential
  equations via rational approximation}, {IMA} Journal of Numerical Analysis,
  (2021), \url{https://doi.org/10.1093/imanum/drac022}.

\bibitem{Kilbas2006}
{\sc A.~Kilbas, H.~Srivastava, and J.~J. Trujillo}, {\em Theory and
  applications of fractional differential equations}, vol.~204 of North-Holland
  mathematics studies, elsevier, Boston, 2006.
\newblock Includes bibliographical references and index.

\bibitem{Kochubei1989}
{\sc A.~N. Kochubei}, {\em The {Cauchy} problem for evolution equations of
  fractional order}, Differentsial'nye Uravneniya, 25 (1989), pp.~1359--1368.
\newblock translation in Differential Equations 25 (1989), no. 8, 967--974
  (1990).

\bibitem{Kochubei1990}
{\sc A.~N. Kochubei}, {\em Fractional-order diffusion}, Differential Equations,
  26 (1990), pp.~485--492.

\bibitem{b_krein_en}
{\sc S.~Krein}, {\em Linear Differential Operators in {Banach} Spaces}, Amer.
  Math. Soc., New York, 1971.

\bibitem{Lang2012}
{\sc S.~Lang}, {\em Real and functional analysis}, vol.~142, Springer Science
  \& Business Media, 2012.

\bibitem{Li2009}
{\sc F.-B. Li, M.~Li, and Q.~Zheng}, {\em Fractional evolution equations
  governed by coercive differential operators}, in Abstract and Applied
  Analysis, vol.~2009, Hindawi, 2009,
  \url{https://doi.org/10.1155/2009/438690}.

\bibitem{Li2013a}
{\sc G.~Li, D.~Zhang, X.~Jia, and M.~Yamamoto}, {\em Simultaneous inversion for
  the space-dependent diffusion coefficient and the fractional order in the
  time-fractional diffusion equation}, Inverse Problems, 29 (2013), p.~065014,
  \url{https://doi.org/10.1088/0266-5611/29/6/065014}.

\bibitem{Li2011}
{\sc K.~Li and J.~Peng}, {\em Fractional abstract {Cauchy} problems}, Integral
  Equations and Operator Theory, 70 (2011), pp.~333--361,
  \url{https://doi.org/10.1007/s00020-011-1864-5}.

\bibitem{Li2015}
{\sc Y.~Li}, {\em Regularity of mild solutions for fractional abstract {Cauchy}
  problem with order $\alpha \in (1, 2)$}, Zeitschrift f{\"u}r angewandte
  Mathematik und Physik, 66 (2015), pp.~3283--3298,
  \url{https://doi.org/10.1007/s00033-015-0577-z}.

\bibitem{Li2016}
{\sc Y.-N. Li, H.-R. Sun, and Z.~Feng}, {\em Fractional abstract {Cauchy}
  problem with order $\alpha \in (1, 2)$}, Dynamics of Partial Differential
  Equations, 13 (2016), pp.~155--177,
  \url{https://doi.org/10.4310/dpde.2016.v13.n2.a4}.

\bibitem{Lizama2015}
{\sc C.~Lizama}, {\em $l_p$-maximal regularity for fractional difference
  equations on {UMD} spaces}, Mathematische Nachrichten, 288 (2015),
  pp.~2079--2092, \url{https://doi.org/10.1002/mana.201400326}.

\bibitem{Mainardi1996}
{\sc F.~Mainardi}, {\em The fundamental solutions for the fractional
  diffusion-wave equation}, Applied Mathematics Letters, 9 (1996), pp.~23--28,
  \url{https://doi.org/10.1016/0893-9659(96)00089-4}.

\bibitem{Mainardi2022}
{\sc F.~Mainardi}, {\em Fractional calculus and waves in linear
  viscoelasticity: an introduction to mathematical models}, World Scientific,
  2022.

\bibitem{Mainardi2010}
{\sc F.~Mainardi, A.~Mura, and G.~Pagnini}, {\em The \textit{M}-wright function
  in time-fractional diffusion processes: A tutorial survey}, International
  Journal of Differential Equations, 2010 (2010), pp.~1--29,
  \url{https://doi.org/10.1155/2010/104505}.

\bibitem{McLean2010}
{\sc W.~McLean and V.~Thom{\'e}e}, {\em Numerical solution via {Laplace}
  transforms of a fractional order evolution equation}, The Journal of Integral
  Equations and Applications,  (2010), pp.~57--94,
  \url{https://doi.org/10.1216/jie-2010-22-1-57}.

\bibitem{Oldham1974}
{\sc K.~Oldham and J.~Spanier}, {\em The fractional calculus theory and
  applications of differentiation and integration to arbitrary order},
  Elsevier, 1974.

\bibitem{Podlubny1999}
{\sc I.~Podlubny}, {\em Fractional Differential Equations: An Introduction to
  Fractional Derivatives, Fractional Differential Equations, to Methods of
  their Solution and some of their Applications}, vol.~198 of Mathematics in
  Science and Engineering, Elsevier Science \& Technology Books, 1999.

\bibitem{Pruess2013}
{\sc J.~Pr{\"u}ss}, {\em Evolutionary integral equations and applications},
  vol.~87 of Modern Birkh{\"a}user Classics, Birkh{\"a}user, 1~ed., 2013.

\bibitem{Ray2018}
{\sc S.~S. Ray and S.~Sahoo}, {\em Generalized fractional order differential
  equations arising in physical models}, Chapman and Hall/CRC, 2018,
  \url{https://doi.org/10.1201/9780429430046}.

\bibitem{Roos2008}
{\sc H.-G. Roos, M.~Stynes, and L.~Tobiska}, {\em Robust Numerical Methods for
  Singularly Perturbed Differential Equations: Convection-Diffusion-Reaction
  and Flow Problems}, Springer Series in Computational Mathematics 24,
  Springer-Verlag Berlin Heidelberg, 2~ed., 2008.

\bibitem{Sakamoto2011}
{\sc K.~Sakamoto and M.~Yamamoto}, {\em Initial value/boundary value problems
  for fractional diffusion-wave equations and applications to some inverse
  problems}, Journal of Mathematical Analysis and Applications, 382 (2011),
  pp.~426--447, \url{https://doi.org/10.1016/j.jmaa.2011.04.058}.

\bibitem{Schneider1989}
{\sc W.~R. Schneider and W.~Wyss}, {\em Fractional diffusion and wave
  equations}, Journal of Mathematical Physics, 30 (1989), pp.~134--144,
  \url{https://doi.org/10.1063/1.528578}.

\bibitem{Stynes_2019}
{\sc M.~Stynes}, {\em Singularities}, in Handbook of Fractional Calculus with
  Applications, G.~E. Karniadakis, ed., vol.~3, De Gruyter, apr 2019,
  pp.~287--306, \url{https://doi.org/10.1515/9783110571684-011}.

\bibitem{Sytnyk2023}
{\sc D.~Sytnyk and B.~Wohlmuth}, {\em Exponentially convergent numerical method
  for abstract {Cauchy} problem with fractional derivative of caputo type},
  Mathematics, 11 (2023), \url{https://doi.org/10.3390/math11102312},
  \url{https://www.mdpi.com/2227-7390/11/10/2312}.

\bibitem{Vasylyk2022}
{\sc V.~Vasylyk, I.~Gavrilyuk, and V.~Makarov}, {\em Exponentially convergent
  method for the approximation of a differential equation with fractional
  derivative and unbounded operator coefficient in a {Banach} space}, Ukrainian
  Mathematical Journal, 74 (2022), pp.~171--185,
  \url{https://doi.org/10.1007/s11253-022-02056-8}.

\bibitem{Zhokh2019}
{\sc A.~Zhokh and P.~Strizhak}, {\em Macroscale modeling the methanol anomalous
  transport in the porous pellet using the time-fractional diffusion and
  fractional brownian motion: A model comparison}, Communications in Nonlinear
  Science and Numerical Simulation, 79 (2019), p.~104922,
  \url{https://doi.org/10.1016/j.cnsns.2019.104922}.

\bibitem{Zhou2013}
{\sc Y.~Zhou, X.~Shen, and L.~Zhang}, {\em Cauchy problem for fractional
  evolution equations with caputo derivative}, The European Physical Journal
  Special Topics, 222 (2013), pp.~1749--1765,
  \url{https://doi.org/10.1140/epjst/e2013-01961-5}.

\end{thebibliography}
}
\fi
\end{document}